\newcommand{\be}{\begin{otherlanguage}{english}}
\newcommand{\ee}{\end{otherlanguage}}
\theoremstyle{definition}
\newtheorem{defn}[subsection]{Définition}
\newtheorem{remq}[subsection]{Remarque}
\newtheorem{exem}[subsection]{Exemple}
\theoremstyle{plain}
\newtheorem{lemm}[subsection]{Lemme}
\newtheorem{prop}[subsection]{Proposition}
\newtheorem{theo}[subsection]{Théorème}
\newtheorem{coro}[subsection]{Corollaire}
\numberwithin{equation}{subsection}
\newcommand{\beq}{\begin{equation}}
\newcommand{\eeq}{\end{equation}}
\newenvironment{ea}{\csname eqnarray*\endcsname}{\csname endeqnarray*\endcsname}%equivalent to eqnarray*, frequently used for specification of map/functor
\DeclareSymbolFont{eu}{U}{eur}{m}{n}
\DeclareMathSymbol{\alpha}{\mathord}{eu}{"0B}
\DeclareMathSymbol{\beta}{\mathord}{eu}{"0C}
\DeclareMathSymbol{\delta}{\mathord}{eu}{"0E}
\DeclareMathSymbol{\epsilon}{\mathord}{eu}{"0F}
\DeclareMathSymbol{\zeta}{\mathord}{eu}{"10}
\DeclareMathSymbol{\iota}{\mathord}{eu}{"13}
\DeclareMathSymbol{\nu}{\mathord}{eu}{"17}
\DeclareMathSymbol{\pi}{\mathord}{eu}{"19}
\DeclareMathSymbol{\rho}{\mathord}{eu}{"1A}
\DeclareMathSymbol{\tau}{\mathord}{eu}{"1C}
\DeclareMathSymbol{\chi}{\mathord}{eu}{"1F}
\DeclareMathSymbol{\psi}{\mathord}{eu}{"20}
\newcommand{\N}{\mathbb{N}}
\newcommand{\Z}{\mathbb{Z}}
\newcommand{\F}{\mathbb{F}}
\newcommand{\Q}{\mathbb{Q}}
\newcommand{\cF}{\mathcal{F}}
\newcommand{\cG}{\mathcal{G}}
\newcommand{\fH}{\mathcal{H}}
\newcommand{\fL}{\mathcal{L}}
\newcommand{\fm}{\mathfrak{m}}
\newcommand{\al}{\alpha}
\newcommand{\eps}{\epsilon}
\newcommand{\bsl}{\backslash}
\newcommand{\Lt}{\otimes^L}%left derived functor of tensor product
\newcommand{\Qlb}{\overline{\Q_l}}
\newcommand{\Qlalb}{\overline{\Q_{l_{\al}}}}
\newcommand{\Dbc}{D^b_c}
\newcommand{\RcHom}{R\mathcal{H}\mathit{om}}
\newcommand{\xb}{{\bar{x}}}
\newcommand{\yb}{{\bar{y}}}
\newcommand{\bb}{{\bar{b}}}
\newcommand{\kb}{{\bar{k}}}
\newcommand{\Xkb}{X_\kb}
\newcommand{\lio}{{(l,\iota)}}
\newcommand{\lion}{{\lio,n}}
\newcommand{\liow}{{\lio,w}}
\newcommand{\lidI}{{\lio\in I}}
\newcommand{\aldA}{{\al \in A}}
\newcommand{\mt}{&\mapsto&}
\newcommand{\ndP}{{n\in P}}
\newcommand{\Rl}{\mathrm{Rl}}
\newcommand{\sto}[1][]{%\stackrel{#1}{\to}} %\rightarrow with superscript
\xrightarrow{#1}} %extendible \rightarrow with superscript
\newcommand{\har}[1][]{\stackrel{#1}{\hookrightarrow}}
\newcommand{\adhx}{\overline{\{x\}}}
\newcommand{\pH}{{}^p H}
\newcommand{\PH}{{}^P H}
\newcommand{\tle}[2][]{\tau^{#1}_{\le#2}}
\newcommand{\tYle}[2][]{\tle[Y_{#1}]{#2}}
\newcommand{\ptle}[2][]{{}^p \tau^{#1}_{\le#2}}
\newcommand{\ptpg}[1]{{}^p \tau_{>#1}}
\newcommand{\ptYle}[1]{\ptle[Y]{#1}}
\newcommand{\Ptle}[1]{{}^P \tau_{\le #1}}
\newcommand{\Ptled}{\Ptle d}
\newcommand{\Ptpgd}{{}^P \tau_{>d}}
\newcommand{\Dle}[1]{D^{\le#1}}
\newcommand{\Dge}[1]{D^{\ge#1}}
\newcommand{\pDle}[1]{{}^pD^{\le#1}}
\newcommand{\pDge}[1]{{}^pD^{\ge#1}}
\newcommand{\PDle}[1]{{}^PD^{\le#1}}
\newcommand{\PDge}[1]{{}^PD^{\ge#1}}
\newcommand{\m}{^{(m)}}
\newcommand{\mm}{^{(m-1)}}
\newcommand{\alb}{{\al_\beta}}
\newcommand{\Km}{K_m}
\newcommand{\Dbm}{D^b_m}
\newcommand{\Perm}{\Per_m}
\newcommand{\Fqn}{\F_{q^n}}
\newcommand{\wiZ}{{w\in\Z}}
\newcommand{\s}{_\bullet}
\newcommand{\ensdr}[2]{\left\{ #1 \,\left|\, #2 \right.\right\}}
\newcommand{\RG}{R\Gamma}
\newcommand{\red}{{\mathrm{red}}}
\DeclareMathOperator{\Spec}{Spec}
\DeclareMathOperator{\Tr}{Tr}
\DeclareMathOperator{\Fr}{Fr}
\DeclareMathOperator{\Gal}{Gal}
\DeclareMathOperator{\Ob}{Ob}
\DeclareMathOperator{\Per}{Per}
\DeclareMathOperator{\Hom}{Hom}
\DeclareMathOperator{\fr}{fr}
\let\tnm=\textnormal
\newcommand{\tm}[1]{\textnormal{(#1)}}
\newcommand{\tf}{de type fini\xspace}
\newcommand{\faisc}{faisceau\xspace}
\newcommand{\faisx}{faisceaux\xspace}
\newcommand{\cat}{catégorie\xspace}
\newcommand{\cats}{catégories\xspace}
\newcommand{\geom}{géométrique\xspace}
\newcommand{\alg}{algébrique\xspace}
\newcommand{\elt}{élément\xspace}
\newcommand{\dfn}{définition\xspace}
\newcommand{\tq}{tel que\xspace}
\newcommand{\ssi}{si et seulement si\xspace}
\newcommand{\coeff}{coefficient\xspace}
\newcommand{\appl}{application\xspace}
\newcommand{\df}{de dimension finie\xspace}
\newcommand{\ev}{espace vectoriel\xspace}
\newcommand{\resp}{%respectivement
resp.\xspace}
\newcommand{\thm}{théorème\xspace}
\newcommand{\Ops}{On peut supposer\xspace}
\newcommand{\ops}{on peut supposer\xspace}
\newcommand{\irr}{irréductible\xspace}
\newcommand{\inte}{intégralité\xspace}
\newcommand{\dist}{distingué\xspace}
\newcommand{\trdists}{triangles distingués\xspace}
\newcommand{\dem}{démonstration\xspace}
\newcommand{\ie}{\emph{i. e.} }
\newcommand{\rev}{revêtement\xspace}
\newcommand{\proj}{projectif\xspace}
\newcommand{\hyp}{hypothèse\xspace}
\newcommand{\resol}{résolution\xspace}
\newcommand{\Thm}{Théorème\xspace}
\newcommand{\car}{caractéristique\xspace}
\newcommand{\cf}{\emph{cf.} }
\newcommand{\Cf}{\emph{Cf.} }
\newcommand{\enpart}{en particulier\xspace}
\newcommand{\dpr}{d'après }
\newcommand{\Dpr}{D'après }
\newcommand{\compl}{complémentaire\xspace}
\newcommand{\diacom}{diagramme commutatif\xspace}
\newcommand{\rec}{récurrence\xspace}
\newcommand{\eqdim}{équidimensionnel\xspace}
\newcommand{\gelt}{généralement\xspace}
\newcommand{\Ecom}{$E$-compatible\xspace}
\newcommand{\Ecomp}{$E$-compatibilité\xspace}
\newcommand{\Tent}{$T$-entier\xspace}
\newcommand{\Tint}{$T$-intégralité\xspace}
\newcommand{\tstr}{$t$-structure\xspace}
\newcommand{\lcit}[1][]{[\emph{ibid.}, #1]} %change
\begin{document}
\title{Théorème de Gabber d'indépendance de $l$}
\author{rédigé par Weizhe \textsc{Zheng}}
\date{%
Mémoire de Master 2\ieme année\footnotemark\\
réalisé sous la direction du Professeur Luc Illusie}
%\date{}
\maketitle%
{\renewcommand{\thefootnote}{\fnsymbol{footnote}}%
\footnotetext[1]{Soutenu le 1\ier
juillet 2005.\\ Courriel
: \textsf{weizhe.zheng@math.u-psud.fr}}}%
{\small
\def\contentsname{Sommaire}%
\tableofcontents}

\vspace{2\baselineskip} On expose ici le \thm de Gabber
d'indépendance de $l$ pour la cohomologie d'intersection d'un
schéma propre équidimensionnel sur le spectre d'un corps fini
(\ref{th.IH}). On suit \cite{ind} à très peu près. Je remercie
vivement mon directeur, Luc Illusie, qui m'a beaucoup aidé.

\section{Suites récurrentes linéaires} On aura besoin de
certaines propriétés élémentaires des suites.

Soient $P\subset\Z$ une partie non vide stable par addition par
$\N$ et $E$ un corps. On désigne par $E^P$ l'ensemble des suites
$(a_n)_\ndP$ à valeurs dans $E$ (\ie des applications de $P$ vers
$E$). On pose
\begin{displaymath}
\begin{array}{rrcl}
T:& E^P &\to& E^P\\
&(a_n)_\ndP \mt (a_{n+1})_\ndP.
\end{array}
\end{displaymath}

\begin{prop}\label{pr.Rl}
Soit $a=(a_n)_{n\in P} \in E^P$. Les conditions suivantes sont
équivalentes :

\tm{a} Il existe un polynôme\/ $Q(X) \in E[X]$ à \coeff constant
non-nul tel que\/ $Q(T)a=0$.

\tm{b} il existe $r\ge 0$, $c_0,\dots,c_r\in E$, $c_0c_r \neq 0$
tels que
\[\sum_{i=0}^r c_i a_{n+i}=0 \text{ pour tout } n\in P.\]

\tm{c} le sous-$E$-\ev\/ $E[T]a\subset E^P$ est \df et\/ $T$ en
est un automorphisme.
\end{prop}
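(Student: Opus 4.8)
Le plan est d'établir $\tm{a}\Leftrightarrow\tm{b}$ essentiellement par \dfn, puis la chaîne $\tm{b}\Rightarrow\tm{c}\Rightarrow\tm{a}$. Pour $\tm{a}\Leftrightarrow\tm{b}$ : en écrivant un polynôme de $E[X]$ sous la forme $Q(X)=\sum_{i=0}^r c_iX^i$ avec $c_r\neq 0$, on a $(Q(T)a)_n=\sum_{i=0}^r c_i a_{n+i}$ pour tout $n\in P$, les $a_{n+i}$ ayant un sens puisque $P$ est stable par addition par $\N$ ; la nullité de $Q(T)a$ équivaut donc exactement à la relation de \rec de $\tm{b}$, la condition « \coeff constant non-nul » se traduisant par $c_0\neq 0$.

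Pour $\tm{b}\Rightarrow\tm{c}$, posons $V=E[T]a\subset E^P$, qui est stable par $T$ par construction. De $\sum_{i=0}^r c_iT^ia=0$ et $c_r\neq 0$ on tire $T^ra\in\mathrm{Vect}_E(a,Ta,\dots,T^{r-1}a)$, d'où par \rec $T^ja$ appartient à ce sous-\ev pour tout $j\ge 0$ ; ainsi $V=\mathrm{Vect}_E(a,Ta,\dots,T^{r-1}a)$ est \df, de dimension $\le r$. De la même relation et de $c_0\neq 0$ on tire $a=T\bigl(-c_0^{-1}\sum_{i=1}^r c_iT^{i-1}a\bigr)\in T(V)$, donc $T^ja\in T(V)$ pour tout $j\ge 0$ et par suite $T(V)=V$. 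L'endomorphisme $T|_V$ de l'\ev $V$, qui est \df, est donc surjectif, et partant bijectif.

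Pour $\tm{c}\Rightarrow\tm{a}$, soit $Q(X)\in E[X]$ le polynôme \car de l'automorphisme $T|_V$ de $V=E[T]a$ (avec la convention $Q=1$ lorsque $V=0$, \ie lorsque $a=0$). Comme $a\in V$, le \thm de Cayley--Hamilton donne $Q(T)a=Q(T|_V)a=0$, tandis que le \coeff constant de $Q$ vaut $(-1)^{\dim V}\det(T|_V)$, non nul puisque $T|_V$ est inversible. D'où $\tm{a}$.

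L'étape la plus substantielle est $\tm{b}\Rightarrow\tm{c}$ : elle demande d'exploiter séparément $c_r\neq 0$, qui assure la dimension finie de $V$, et $c_0\neq 0$, qui assure la surjectivité de $T|_V$ et donc, la dimension étant finie, sa bijectivité ; sans l'\hyp sur le \coeff constant, $T|_V$ ne serait garanti qu'en tant qu'endomorphisme de $V$.
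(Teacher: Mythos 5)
Votre démonstration est correcte et suit essentiellement la même démarche que celle du texte : $\tm{a}\Leftrightarrow\tm{b}$ par le calcul $(Q(T)a)_n=\sum c_i a_{n+i}$, puis dimension finie de $E[T]a$ via $c_r\neq 0$ et bijectivité de $T$ via $c_0\neq 0$, et enfin un polynôme annulateur de $T|_{E[T]a}$ à terme constant non nul pour conclure. Les seules différences sont cosmétiques (surjectivité plutôt qu'injectivité de $T|_V$, polynôme caractéristique plutôt que minimal).
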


\begin{proof}
(a) $\Leftrightarrow$ (b) Si on pose $Q(X)=\sum_{i=0}^r c_i X^i$,
alors $(Q(T)a)_n=\sum_{i=0}^r c_i a_{n+i}$.

(a) et (b) $\Rightarrow$ (c) $E[T]a$ est un sous-$E$-\ev \df, \dpr
(a). L'action de $T$ en est injective, \dpr (b).

(c) $\Rightarrow$ (a) On peut prendre comme $Q(X)$ le polynôme
mininal de $T$ sur $E[T]a$.
\end{proof}

\begin{defn}
Une suite $a\in E^P$ est dite récurrente linéaire si elle
satisfait aux conditions équivalentes de \ref{pr.Rl}. L'ensemble
des suites récurrentes linéaires se note $\Rl(P,E)$.
\end{defn}

Pour tout $\alpha\in E^*$, $(\alpha^n)_\ndP\in \Rl(P,E)$.

\begin{coro}\label{cor.Rl}
\tm{i} $\Rl(P,E)$ est un sous-$E$-\ev de\/ $E^P$.

\tm{ii} L'\appl de restriction\/ $\Rl(\Z,E)\to \Rl(P,E)$ est
bijective.

\tm{iii} Soit\/ $E'$ une extension de\/ $E$. Si $a\in \Rl(P,E')$
prend toutes ses valeurs dans\/ $E$, alors $a\in \Rl(P,E)$.
\end{coro}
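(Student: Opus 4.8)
The plan is to establish \tm{i}, \tm{ii}, \tm{iii} separately, each time using whichever of the three equivalent conditions of \ref{pr.Rl} is most convenient. For \tm{i}, stability under scalars is immediate from \tm{b}, since a relation for $a$ is a relation for $\lambda a$. For a sum $a+b$ I would pick, by \tm{a}, polynomials $Q_a,Q_b\in E[X]$ with non-zero constant term annihilating $a$, resp.\ $b$; then $Q_aQ_b$ has constant term $Q_a(0)Q_b(0)\neq 0$, and since powers of $T$ commute, $(Q_aQ_b)(T)(a+b)=Q_b(T)\bigl(Q_a(T)a\bigr)+Q_a(T)\bigl(Q_b(T)b\bigr)=0$, so $a+b$ satisfies \tm{a}. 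Hence $\Rl(P,E)$ is an $E$-subspace of $E^P$.

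For \tm{ii} the key fact about $P$ is that, being non-empty and stable under adding $\N$, it contains a whole half-line: fixing $n_0\in P$ gives $\{n\in\Z:n\ge n_0\}\subseteq P$. The restriction map is well defined because a relation of type \tm{b} on $\Z$ restricts to one on $P$. For injectivity, if $\tilde a,\tilde a'\in\Rl(\Z,E)$ agree on $P$, their difference $c$ lies in $\Rl(\Z,E)$ by \tm{i} and vanishes on $\{n\ge n_0\}$; choosing a relation $\sum_{i=0}^r d_i c_{n+i}=0$ on $\Z$ with $d_0d_r\neq 0$ (the case $r=0$ being trivial), the condition $d_0\neq 0$ lets one solve for $c_n$ in terms of $c_{n+1},\dots,c_{n+r}$, and a downward induction starting from the half-line forces $c\equiv 0$. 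For surjectivity, given $a\in\Rl(P,E)$ with $\sum_{i=0}^r c_i a_{n+i}=0$ on $P$ and $c_0c_r\neq 0$, I would define $\tilde a\in E^\Z$ by $\tilde a_n=a_n$ for $n\ge n_0$ and $\tilde a_n=-c_0^{-1}\sum_{i=1}^r c_i\tilde a_{n+i}$ for $n<n_0$; this $\tilde a$ satisfies the same relation on all of $\Z$, so $\tilde a\in\Rl(\Z,E)$. The delicate point is that $\tilde a$ restricts to $a$ on all of $P$, not merely on the half-line: the difference $b=\tilde a|_P-a$ satisfies the relation on $P$ and vanishes on $\{n\ge n_0\}$, so $\{n\in P:b_n\neq 0\}$ is bounded above; were it non-empty, its largest element $m$ would satisfy $b_{m+1}=\dots=b_{m+r}=0$ (as $m+1,\dots,m+r\in P$), forcing $c_0b_m=0$, hence $b_m=0$, a contradiction.

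For \tm{iii}, let $a\in\Rl(P,E')$ have all its values in $E$; then each $T^j a$ also does, so $V:=E[T]a\subseteq E^P$, whereas $V':=E'[T]a$ is finite-dimensional over $E'$, of dimension $d$ say, by \tm{c}. The vectors $a,Ta,\dots,T^d a$ are then $E'$-linearly dependent; but the set of $(\lambda_0,\dots,\lambda_d)$ with $\sum_j\lambda_j a_{n+j}=0$ for all $n\in P$ is the solution space of a homogeneous linear system whose coefficients $a_{n+j}$ lie in $E$, and such a system has a non-trivial solution over $E'$ if and only if it has one over $E$. Hence there is a non-zero $R\in E[X]$ of degree $\le d$ with $R(T)a=0$. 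Writing $R=X^kS$ with $S(0)\neq 0$ and using that $T$ is injective on $V'$ (by \tm{c} over $E'$), so that $T^k\bigl(S(T)a\bigr)=0$ forces $S(T)a=0$, we obtain $S\in E[X]$ with non-zero constant term annihilating $a$, i.e.\ condition \tm{a} over $E$; thus $a\in\Rl(P,E)$.

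I expect the two genuinely non-formal points to be the verification in \tm{ii} that the backward extension agrees with $a$ everywhere on $P$ (handled by the ``largest bad index'' argument), and in \tm{iii} the descent of a linear relation from $E'$ to $E$ together with clearing the possibly vanishing constant term; everything else reduces to bookkeeping with the equivalences of \ref{pr.Rl}.
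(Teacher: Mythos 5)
Your proof is correct. For \tm{i} and \tm{ii} the paper simply declares the statements clear from conditions \tm{a} and \tm{b} of \ref{pr.Rl}; your arguments (product of annihilating polynomials for a sum, backward extension by the recurrence together with the ``largest bad index'' check for surjectivity of the restriction, and downward induction using $c_0\neq 0$ for injectivity, all resting on the fact that $P$ contains a half-line $\{n\ge n_0\}$) are exactly the intended expansions. For \tm{iii} you take a genuinely different route: the paper verifies condition \tm{c} over $E$ directly, using $E[T]a\otimes_E E'\simeq E'[T]a$ to get $\dim_E E[T]a<\infty$ and the injection $E[T]a\hookrightarrow E'[T]a$ to transport the injectivity of $T$; you instead descend an explicit dependence relation among $a,Ta,\dots,T^d a$ from $E'$ to $E$ (the rank of a matrix with entries in $E$ does not change under field extension) and then clear a possible factor $X^k$ using the injectivity of $T$ on $E'[T]a$, landing on condition \tm{a}. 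Both are sound, and of comparable depth: the flat descent of linear independence hidden in the paper's isomorphism $E[T]a\otimes_E E'\simeq E'[T]a$ is precisely the rank argument you make explicit, while your extra step of removing the factor $X^k$ is the price paid for working with \tm{a} rather than \tm{c}. No gaps.
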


\begin{proof}
(i) (\resp (ii)) est clair d'après la condition (a) (\resp (b)).

Pour (iii), on utilise la condition (c). On a $E[T]a \otimes_E E'
\simeq E'[T]a$, donc $E[T]a$ est \df sur $E$. On a une injection
$E[T]a \har E'[T]a$, ce qui donne l'injectivité de l'action de $T$
sur $E[T]a$.
%et du fait que
%l'\appl \can $E^P\otimes_E F \to F^P$ est injective.
\end{proof}

\section{Notations et rappels}
Pour tout nombre premier, on choisit une clôture algébrique
$\Qlb$. Soit $F$ un corps de \car $\chi_F \neq l$. Pour un schéma
$X$ séparé \tf sur $\Spec F$, on définit les $\Qlb$-faisceaux
constructibles sur $X$ et la catégorie dérivée
$\Dbc(X,\Qlb)$ (\cf %\cite[1.4.2]{SGA5vi}
\cite[1.1]{WeilII} et \cite{Ekedahl}). Les \cats dérivées
$\Dbc(-,\Qlb)$ sont stables par les six opérations $Rf_!, Rf_*,
f^*, Rf^!, \Lt, \RcHom$, et \enpart par le foncteur dualisant $D$.
On normalise le dernier par
\[D_X K = \RcHom(K,Ra^!\Qlb)\]
où $a: X\to\Spec F$.

On désigne par $K(X,\Qlb)$ le groupe de Grothendieck des
$\Qlb$-\faisx. Pour un $\Qlb$-faisceau $\cF$ sur $X$, on note
$[\cF]$ sa classe dans $K(X,\Qlb)$. L'application $\cF\mapsto
[\cF]$ se prolonge en une \appl surjective
\begin{ea}
\Ob(\Dbc(X,\Qlb)) &\to& K(X,\Qlb)\\
K\mt [K]=\sum_{q\in\Z} (-1)^q [\fH^q(K)].
\end{ea}
On désigne parfois l'image de $K$ encore par $K$. Les six
opérations sur $\Dbc(-,\Qlb)$ induisent les %six opérations sur
homomorphismes entre les groupes $K(-,\Qlb)$.

\subsection{Perversité}

On prend la perversité autoduale $p$ et la \tstr $(\pDle0,\pDge0)$
sur $\Dbc(X,\Qlb)$ comme dans \cite[4.0]{BBD}. On rappelle que
pour $K\in \Dbc(X,\Qlb)$,
\[
K\in \pDle0 \iff \fH^q(i_x^*K)=0, \forall x\in X, \forall
q>-\delta(x),
\]
\[
K\in \pDge0 \iff \fH^q(i_x^!K)=0, \forall x\in X, \forall
q<-\delta(x),
\]
où $i_x$ est l'inclusion de $\{x\}$ dans $X$, $i_x^!=j^*Ri^!$ pour
$\{x\} \sto[j] \adhx \stackrel{i}{\hookrightarrow} X$,
$\delta(x)=\dim\adhx$ \lcit[2.2.12 et 2.2.19].

Le foncteur dualisant $D_X$ échange $^p D^{\le 0}$ et $\pDge0$. Si
on note $d=\dim X$, alors $\Dle{-d} \subset \pDle0 \subset \Dle0$,
donc $\Dge0 \subset\pDge0 \subset \Dge{-d}$ par l'orthogonalité \lcit[1.3.4].
%La dernière inclusion
%peut se voir par récurrence noethérienne (\cf \lcit[2.1.2]).

On note $\Per(X,\Qlb)$ la \cat des \faisx pervers sur $X$. Tout
objet de cette \cat est de longueur finie \lcit[4.3.1(i)], donc
\beq\label{eq.K=K(P)}
K(X,\Qlb)=K(\Per(X,\Qlb))
\eeq
est le groupe abélien libre
engendré par les classes d'isomorphie des \faisx pervers simples.

Soit $j: U \har X$ une immersion ouverte. Alors l'extension
intermédiaire d'un faisceau pervers $K\in \Per(U,\Qlb)$ par $j$ se
calcule par la formule $j_{!*}K = {}^p \tau^Y_{\le-1} Rj_*K$
\lcit[1.4.23], où $Y\stackrel{i}{\hookrightarrow} X$ est le fermée
\compl. Si $f: K_1 \to K_2$ est un morphisme dans $\Per(U,\Qlb)$,
alors $j_{!*}f = \ptle[Y]{-1} Rj_* f$.\footnote{Ceci découle de la
factorisation du \diacom dans $\Per(X,\Qlb)$
\[\xymatrix{{}^pj_!K_1 \ar[d]_{{}^pj_! f} \ar[r] & {}^pRj_* K_1\ar[d]^{{}^pRj_* f}\\
{}^pj_!K_2 \ar[r] &{}^pRj_* K_2}\] en le diagramme
\[\xymatrix{{}^pj_!K_1 \ar[d]_{{}^pj_! f} \ar@{>>}[r] &
\ptYle{-1} Rj_* K_1\ar[d]_{\ptYle{-1} Rj_* f} \ar@{>->}[r]
& {}^pRj_* K_1\ar[d]^{{}^pRj_* f}\\
{}^pj_!K_2 \ar@{>>}[r] & \ptYle{-1} Rj_* K_1 \ar@{>->}[r]
&{}^pRj_* K_2}\] dont le carré à gauche est commutatif car celui à
droite l'est.}

Notons que
\[\ensdr{K\in\Dbc(X,\Qlb)}{i^*K\in \Dle{-d-1}_Y} \subset
\ensdr{K\in\Dbc(X,\Qlb) }{ i^*K\in \pDle{-1}_Y } ,\] où $d=\dim
Y$. On a donc un morphism de foncteurs
\[\tYle{-d-1} \to \ptYle{-1}
: \Dbc(X,\Qlb) \to \Dbc(X,\Qlb).\]

\begin{prop}\label{pr.j!*}
\tm{i} Si le morphisme $\tYle{-d-1} Rj_* K_\beta \to \ptYle{-1}
Rj_* K_\beta$ est un isomorphisme pour chaque\/ $\beta=1,2$, alors
on peut faire des identifications $j_{!*} K_\beta = \tYle{-d-1}
Rj_* K_\beta$, $\beta=1,2$ et $j_{!*} f = \tYle{-d-1} Rj_* f$.

\tm{ii} Si $Y$ est lisse purement de dimension $d$ et les $\fH^e
i^*Rj_*K_\beta$, $e \ge -d$, sont lisses pour\/ $\beta=1,2$, alors
l'hypothèse de \tm{i} est vraie.
\end{prop}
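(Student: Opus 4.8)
**Plan for proving Proposition \ref{pr.j!*}:**

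The key idea is that the intermediate extension $j_{!*}K_\beta$ was computed in the preceding discussion as $j_{!*}K_\beta = \ptYle{-1} Rj_* K_\beta$, so part (i) amounts to transporting this description along the hypothesized isomorphism $\tYle{-d-1}Rj_*K_\beta \xrightarrow{\sim} \ptYle{-1}Rj_*K_\beta$ and checking it is compatible with morphisms. For part (ii) the real content is a local computation of the stalks of $i^*Rj_*K_\beta$ showing that the naive truncation at $-d-1$ already lands in $\pDle{-1}_Y$ and, symmetrically via duality, that nothing is lost on the costalk side.

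Let me sketch (i) first. By the formula $j_{!*}K_\beta = \ptYle{-1}Rj_*K_\beta$ recalled above, the hypothesized isomorphism $\phi_\beta\colon \tYle{-d-1}Rj_*K_\beta \to \ptYle{-1}Rj_*K_\beta$ lets us identify $j_{!*}K_\beta$ with $\tYle{-d-1}Rj_*K_\beta$ as objects of $\Dbc(X,\Qlb)$. For the compatibility with a morphism $f\colon K_1\to K_2$ in $\Per(U,\Qlb)$, one uses that $j_{!*}f = \ptYle{-1}Rj_* f$ (the footnoted fact) together with the naturality of the morphism of functors $\tYle{-d-1}\to\ptYle{-1}$: the square
\[\xymatrix{\tYle{-d-1}Rj_*K_1 \ar[d]_{\tYle{-d-1}Rj_* f} \ar[r]^-{\phi_1} & \ptYle{-1}Rj_*K_1 \ar[d]^{\ptYle{-1}Rj_* f}\\ \tYle{-d-1}Rj_*K_2 \ar[r]^-{\phi_2} & \ptYle{-1}Rj_*K_2}\]
commutes, so under the identifications $\phi_\beta$ the map $j_{!*}f = \ptYle{-1}Rj_* f$ becomes $\tYle{-d-1}Rj_* f$, as claimed. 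This part is essentially formal.

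Now (ii), which is the substantive point. It suffices to show that under the stated hypotheses the canonical map $\tYle{-d-1}Rj_*K_\beta \to \ptYle{-1}Rj_*K_\beta$ is an isomorphism; since both sit inside $Rj_*K_\beta$ via the canonical triangles, it is enough to check this after applying $i^*$, i.e. to show $\tle{-d-1}i^*Rj_*K_\beta \to \ptle{-1}(i^*Rj_*K_\beta)$ is an isomorphism in $\Dbc(Y,\Qlb)$ (using that $i$ is a closed immersion and $i^*$ commutes with the ordinary truncation, and with the perverse truncation for the induced stratification of $Y$). Write $L = i^*Rj_*K_\beta$. Because $Y$ is smooth purely of dimension $d$, one has $\delta(y)=d$ for every point $y$ (a point of $Y$ of topological codimension $c$ has closure of dimension $d-c$... rather, we use: for a point $y\in Y$, $\delta(y)=\dim\overline{\{y\}}$, and smoothness of $Y$ means $Y$ is equidimensional so the perverse $t$-structure on $Y$ is governed by $\delta(y) = d - \operatorname{codim}_Y\overline{\{y\}}$ — but the cleaner route is to note that smooth $Y$ of pure dimension $d$ has $\Dle{-d}(Y)\subset\pDle0(Y)$ with equality on \emph{lisse} complexes). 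Concretely: the hypothesis that $\fH^e L$ is lisse for all $e\ge -d$ means that for such $e$, $\fH^e L[−e]$ placed in the right degree is, up to shift by $d$, a lisse sheaf in perverse degree $0$; so $\tle{-d-1}L$ already satisfies $i_y^*(\tle{-d-1}L)\in\Dle{-d-1}\subset\Dle{-\delta(y)-1}$ at the generic points, and the closed-point conditions defining $\pDle{-1}_Y$ follow. Thus the natural map $\tle{-d-1}L\to\ptle{-1}L$ is an isomorphism, because both compute the same subobject: $\tle{-d-1}L\in\pDle{-1}_Y$ already, and $\tpg{-d-1}L = \fH^{\ge -d}L$ (with lisse cohomology) lies in $\pDge{-d}_Y \subset \pDge0_Y$, so the triangle $\tle{-d-1}L \to L \to \tpg{-d-1}L$ \emph{is} the perverse truncation triangle for $\ptle{-1}$.

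The main obstacle is precisely the bookkeeping in (ii): one must check carefully that "lisse in ordinary degrees $\ge -d$ on a smooth $d$-dimensional $Y$" forces "perverse degrees $\ge 0$", i.e. that a lisse sheaf $\cG$ on smooth $Y$ of pure dimension $d$ has $\cG[d]$ perverse, hence $\cG$ in perverse amplitude $[-d, ?]$ — and symmetrically that truncating at $-d-1$ in the ordinary sense does not kill anything that perverse truncation at $-1$ would retain. The inclusion $\Dle{-d}\subset\pDle0$ recalled in the excerpt handles one direction; the reverse (that $\fH^{\ge -d}L$ is perverse-$\ge 0$ when its cohomology is lisse) uses smoothness of $Y$ in an essential way — on a non-smooth $Y$ a lisse sheaf need not be perverse after shifting by $\dim Y$. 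Once this is pinned down, identifying the ordinary truncation triangle with the perverse truncation triangle is immediate, and then (i) applies.
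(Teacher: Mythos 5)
Votre démonstration est correcte et suit essentiellement la même voie que le texte : pour (i) c'est exactement le carré de naturalité du morphisme de foncteurs $\tYle{-d-1}\to\ptYle{-1}$ joint à la formule $j_{!*}=\ptYle{-1}Rj_*$, et pour (ii) vous explicitez précisément le contenu des références [BBD, 2.2.4 et 2.2.19] que le texte se contente de citer (sur $Y$ lisse purement de dimension $d$, un complexe à cohomologie lisse concentrée en degrés $\ge -d$ est dans $\pDge0$, donc le triangle de troncation ordinaire en $-d-1$ de $i^*Rj_*K_\beta$ est aussi le triangle de troncation perverse en $-1$, d'où l'isomorphisme voulu par unicité).
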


\begin{proof}
(i) On a un diagramme commutatif
\[\xymatrix{\tYle{-d-1} Rj_* K_1 \ar[d]_{\tYle{-d-1} Rj_* f}\ar[r] & \ptYle{-1}
Rj_* K_1\ar[d]^{\ptYle{-1}
Rj_* f}\\
\tYle{-d-1} Rj_* K_2 \ar[r] & \ptYle{-1} Rj_* K_2}\]

(ii) \lcit[2.2.4 et 2.2.19]
\end{proof}

Dans le cas général, on peut calculer $j_{!*}$ en itérant
\ref{pr.j!*}.

\begin{coro}\label{cor.j!*}
Supposons\/ $F$ parfait. Soient $A$ un ensemble fini,
$(l_\alpha)_{\alpha\in A}$ une famille de nombres premiers $\neq
\chi_F$, $(K_\al)_\aldA \in \prod_{\aldA} \Per(U,\Qlalb)$. Alors
il existe $n\ge 0$ et des immersions ouvertes
\[U=U_0 \har[j_1] U_1 \har[j_2] \cdots \har[j_n] U_n=X\]
vérifiant les conditions suivantes

\tm{i} Pour $1\le m \le n$, $Y_m = U_m \bsl U_{m-1}$ (équipé de la
structure de sous-schéma réduit induite) est lisse sur\/ $F$ et
purement de dimension $d_m$, $d_1 > d_2 > \cdots > d_n \ge 0$,
$K_\al\m = \tle[Y_m]{-d_m-1} Rj_{m*} K_\al\mm$, et $\fH^e(i_m^!
K_\al\m)$ est lisse sur $Y_m$, $\forall e\in \Z, \aldA$, où $i_m:
Y_m \har U_m$, $K_\al\m = j_{m!*} K_\al\mm$, $K_\al^{(0)} =
K_\al$. Donc
\[j_{!*}K_\al = K_\al^{(n)} = \tle[Y_n]{-d_n -1}Rj_{n*} \cdots \tYle[1]{-d_1 -1} Rj_{1*} K_\al,
\quad \forall \aldA.\]

\tm{ii} Soient $\al_1, \al_2 \in A$ tels que $l_{\al_1} =
l_{\al_2}$ et $f:K_{\al_1} \to K_{\al_2}$ un morphisme. Alors
\[j_{!*}f = \tle[Y_n]{-d_n -1}Rj_{n*} \cdots \tYle[1]{-d_1 -1} Rj_{1*}
f.\]

\tm{iii} Si, en outre, $U$ est lisse sur\/ $F$ purement de
dimension $d>\dim(X-U)$ et $\fH^e(K_{\al_\beta})$ est lisse sur\/
$U$, $\forall e\in \Z$, $\beta=1,2$, alors\/ $\forall \beta=1,2$,
$K_\alb\m$ est concentré en degrés $[-d,-d_m-1]$ pour $1\le m \le
n$, et
\begin{ea}
j_{!*}K_\alb &=& \tle{-d_n -1}Rj_{n*} \cdots
\tle{-d_1 -1} Rj_{1*} K_\alb, \\
j_{!*}f &=& \tle{-d_n -1}Rj_{n*} \cdots \tle{-d_1 -1} Rj_{1*} f.
\end{ea}
\end{coro}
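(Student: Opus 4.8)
The whole corollary is an iteration of Proposition \ref{pr.j!*}: the point is to stratify $X$ so that at each stage the open immersion we add has a \emph{smooth} boundary stratum over which the relevant cohomology sheaves of $Rj_{m*}$ become lisse, and then to apply \ref{pr.j!*}(ii). First I would construct the stratification by decreasing dimension. Start with $U_0=U$. If $U_{m-1}\subsetneq X$, let $Z=X\bsl U_{m-1}$ with its reduced structure and let $d_m=\dim Z$; since $F$ is perfect, the smooth locus of the (reduced, finite-type) $F$-scheme $Z$ is a dense open of $Z$, and by shrinking it further I can arrange it to be smooth \emph{purely} of dimension $d_m$ (throw away the components of lower dimension, which live in a closed subset of dimension $<d_m$). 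By generic base change / constructibility (\cite{ind} or \cite[2.2.4, 2.2.19]{BBD}), for each $\al$ the sheaves $\fH^e i_m^* Rj_{m*}K_\al\mm$ are constructible on $Y_m$, so after shrinking $Y_m$ once more — finitely many $e$'s matter — they are all lisse. Set $U_m=U_{m-1}\cup Y_m$, so $U_m$ is open in $X$, $Y_m=U_m\bsl U_{m-1}$ is closed in $U_m$, and $\dim(X\bsl U_m)<d_m$; hence the process stops after $n\le \dim X+1$ steps with $U_n=X$, and the $d_m$ are strictly decreasing.

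**Step (i).** With the stratification in hand, fix $m$ and $\al$. By Proposition \ref{pr.j!*}(ii) applied to $Y=Y_m$ (which is smooth purely of dimension $d_m$) and to the single object $K_\al\mm$, the morphism $\tYle[m]{-d_m-1}Rj_{m*}K_\al\mm \to \ptYle{-1}Rj_{m*}K_\al\mm$ is an isomorphism, so \ref{pr.j!*}(i) lets me \emph{identify} $j_{m!*}K_\al\mm=K_\al\m$ with $\tle[Y_m]{-d_m-1}Rj_{m*}K_\al\mm$. To keep the induction running I must check that $\fH^e(i_m^! K_\al\m)$ is lisse on $Y_m$ for all $e$: this follows from the distinguished triangle relating $i_m^! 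K_\al\m$, $i_m^* K_\al\m$ and $i_m^* Rj_{m*}K_\al\mm$ (together with $i_m^*K_\al\m\in\pDle{-1}_{Y_m}$, i.e.\ the truncation that defines $K_\al\m$), since the $i_m^*Rj_{m*}$-terms are lisse by construction and lisseness is stable under the long exact cohomology sequence. Composing the identifications for $m=1,\dots,n$ gives the displayed formula for $j_{!*}K_\al=K_\al^{(n)}$ (using $j_{n*}\cdots j_{1*}$ factoring $j_*$ for $j:U\har X$, and the standard fact that $j_{!*}$ of a composition is the composition of the $j_{!*}$'s).

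**Step (ii).** Now let $\al_1,\al_2$ with $l_{\al_1}=l_{\al_2}$ and $f:K_{\al_1}\to K_{\al_2}$. The same stratification works (it only depended on finitely many constructibility statements, and I may refine a common one for both indices). At each stage Proposition \ref{pr.j!*}(i) gives not only $j_{m!*}K_{\al_\beta}\mm=\tYle[m]{-d_m-1}Rj_{m*}K_{\al_\beta}\mm$ but also $j_{m!*}f^{(m-1)}=\tYle[m]{-d_m-1}Rj_{m*}f^{(m-1)}$, where $f^{(m-1)}:K_{\al_1}\mm\to K_{\al_2}\mm$ is the morphism produced at the previous stage (here I use that $l_{\al_1}=l_{\al_2}$ so that $f$ and its iterates are morphisms in a single category $\Per(-,\Qlb)$ with $l=l_{\al_1}$, and that intermediate extension is functorial). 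Iterating and composing yields the stated formula for $j_{!*}f$.

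**Step (iii).** Under the extra hypotheses ($U$ smooth purely of dimension $d>\dim(X\bsl U)$, $\fH^e K_{\al_\beta}$ lisse on $U$), I argue by induction on $m$ that $K_{\al_\beta}\m$ is concentrated in degrees $[-d,-d_m-1]$. For $m=0$: $K_{\al_\beta}$ is perverse on $U$ with $U$ smooth purely of dimension $d$, so $K_{\al_\beta}[-d]$ is (up to shift) a lisse sheaf — more precisely $\pDle0\cap\pDge0$ over a smooth purely $d$-dimensional scheme forces concentration in a single degree $-d$ once the cohomology is already known lisse; this handles the base case with $d_0$ interpreted as $d-1$ so the range is $[-d,-d]$, or directly $K_{\al_\beta}^{(0)}$ in degree $-d$. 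For the inductive step, $K_{\al_\beta}\m=\tle[Y_m]{-d_m-1}Rj_{m*}K_{\al_\beta}\mm$: over $U_{m-1}$ this is just $K_{\al_\beta}\mm$, concentrated in $[-d,-d_{m-1}-1]\subset[-d,-d_m-1]$ since $d_{m-1}>d_m$; over $Y_m$ the truncation $\tle{-d_m-1}$ caps the degrees above by $-d_m-1$, and the lower bound $-d$ is inherited because $Rj_{m*}$ is left $t$-exact for the ordinary $t$-structure, so $\fH^e i_m^* Rj_{m*}K_{\al_\beta}\mm=0$ for $e<-d$. Because $Y_m\subset X\bsl U$ has dimension $d_m<d$, we have $-d_m-1\ge -d$, so the range $[-d,-d_m-1]$ is nonempty and the $\ptYle[m]{-1}$ truncation in Step (i) literally coincides with $\tYle[m]{-d_m-1}$ — no $Y_m$-perverse truncation is needed, hence I may replace every $\tYle[m]{-d_m-1}$ by $\tle{-d_m-1}$ in the formulas of (i) and (ii), giving the two displayed identities.

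**Main obstacle.** The routine part is the stratification and the bookkeeping of degree ranges; the delicate point is verifying, at \emph{every} stage of the induction, the hypothesis of Proposition \ref{pr.j!*}(ii) — namely that the sheaves $\fH^e i_m^* Rj_{m*}K_\al\mm$ (for $e\ge -d_m$) are lisse on $Y_m$. This is where one genuinely uses that $F$ is perfect (to get smooth dense strata) and the constructibility/generic base change machinery for $Rj_*$; one must be careful that shrinking $Y_m$ to achieve lisseness for $\al$ does not spoil the purely-$d_m$-dimensional structure or the already-arranged lisseness for other indices $\al'$, which is why the $Y_m$ is chosen \emph{after} intersecting the finitely many required open conditions over $\al\in A$. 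The functoriality claim in (ii) — that the $f^{(m)}$ are again the intermediate extensions and that \ref{pr.j!*}(i) applies to them verbatim — is the other place to be careful, but it is exactly what the footnote after \eqref{eq.K=K(P)} about $j_{!*}f=\ptYle{-1}Rj_*f$ provides.
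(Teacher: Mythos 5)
Your proposal is correct and follows essentially the same route as the paper: stratify the reduced complement by a generic open stratum that is smooth and purely of dimension $d_m$ (using that $F$ is parfait) and on which the finitely many constructible sheaves $\fH^e(i_m^* Rj_{m*}K_\al\mm)$ are lisse, then iterate \ref{pr.j!*}, the functoriality in (ii) coming from \ref{pr.j!*}(i). The only minor divergence is that the paper obtains the lisseness of $\fH^e(i_m^! K_\al\m)$ directly by also choosing the stratum generically for the complexes $Ri^! j_{!*}K_\al$ on $Y$ (via $j_W^*Ri^!j_{!*}=Ri_1^!j_{1!*}$), whereas you deduce it from the triangle $Ri_m^! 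K_\al\m \to i_m^* K_\al\m \to i_m^* Rj_{m*}K_\al\mm$ and stability of lisse sheaves under kernels, cokernels and extensions; both are fine.
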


La condition (iii) ne sera pas utilisée dans la suite.

\begin{proof}
(\Cf \cite[1.5 ?]{Tian}) On montre (i) et (ii) en même temps.
(iii) en suit.

\Ops $U\subsetneqq X$. Par \rec sur $\dim(X-U)$, il suffit de
montrer qu'il existe des immersions ouvertes
\[U\har[j_1] U_1 \har[j'] X\]
telles que $Y_1 = U_1\bsl U$ (réduit) soit lisse sur $F$ et
purement de dimension $d_1 > \dim (X-U_1)$, $j_{1!*} K_\al =
\tYle[1]{-d_1-1} Rj_{1*} K_\al$, $j_{1!*} f = \tYle[1]{-d_1-1}
Rj_{1*} f$, et que $\fH^e(i_1^! j_{1!*} K_\al)$ soit lisse sur
$Y_1$, $\forall e\in \Z$, $\aldA$, où $i_1: Y_1 \har U_1$.

On pose $Y=X\backslash U$ et lui donne la structure de schéma
réduit induite. On prend un ouvert $W$ de $Y$ \eqdim tel qu'il
soit lisse sur $F$, $\dim(Y-W) <\dim Y$ et que $\fH^e(j_W^* i^*
Rj_*K_\al)$, $\fH^e(j_W^* Ri^! j_{!*}K_\al)$ soient lisses sur $W$
pour tout $e\in \Z$ et tout $\aldA$, où $i: Y\har X$, $j_W: W\har
Y$. On pose $U_1= U\cup W$. Alors $Y_1 = W$ est lisse sur $F$ et
purement de dimension $d_1=\dim Y >\dim (X-U_1)$, car $X-U_1 =
Y-W$. On a le \diacom d'immersions
\[\xymatrix{U \ar%@{^{(}->}
@(ur,ul)[rr]^j \ar%@{^{(}->}
[r]^{j_1} & U_1 \ar%@{^{(}->}
[r]^{j'} &X\\
& Y_1 \ar%@{^{(}->}
[u]^{i_1} \ar%@{^{(}->}
[r]^{j_W} &
Y\ar%@{^{(}->}
[u]^{i}}\] Notons que
\[j_W^*i^*Rj_* = i_1^* j'^* Rj'_{*} Rj_{1*}= i_1^*
Rj_{1*},\]
\[j_W^*Ri^!j_{!*} = Ri_1^! j'^* j'_{!*} j_{1!*} = Ri_1^!
j_{1!*}.\]

Donc $\fH^e(i_1^*Rj_{1*}K_\al)$, $\fH^e(i_1^! j_{1!*} K_\al)$ sont
lisses sur $Y_1$, $\forall e\in \Z$, $\aldA$. D'après
\ref{pr.j!*}, $j_{1!*} K_\al = \tYle[1]{-d_1-1} Rj_{1*} K_\al$,
$\aldA$ et $j_{1!*} f = \tYle[1]{-d_1-1} Rj_{1*} f$.
\end{proof}

\subsection{} %change
On fixe un corps fini $k=\F_q$, $q=p^\nu$, et une clôture \alg
$\kb$. Désormais, sauf mention expresse du contraire, on travaille
sur $k$ et on ne considère que des schémas séparés \tf sur $\Spec
k$. Pour un tel schéma $X$, on désigne par $| X |$ l'ensemble des
points fermés de $X$. Soit $l$ un nombre premier $l\nmid q$. Pour
un $\Qlb$-\faisc $\cF$ sur $X$ et $x\in |X|$ (avec un point \geom
\alg $\xb$ de $X$ localisé en $x$), le Frobenius \geom
$\Fr_x\in\Gal(k(\xb)/k(x))$ agit sur la fibre \geom $\cF_{\xb}$
par transport de structure. On pose
\[L_x(\cF,t) = \det(1-t^{\deg x}\Fr_x, \cF_{\xb})^{-1} \in \Qlb(t).\]
On peut le voir comme un \elt de $1+t\Qlb [[t]]$, et on a alors
\beq\label{eq.Lx}
L_x(\cF,t) = \exp(\sum_{n\ge 1} \Tr(\Fr_x^n,\cF_\xb) t^{n\deg
x}/n).
\eeq
La \dfn de $L_x(\cF,t)$ et la formule (\ref{eq.Lx}) s'étendent à
$K(X,\Qlb)$ par additivité, et l'homomorphisme %(ou multiplicativité). change : détailler
\begin{ea}
K(X,\Qlb) &\to& \prod_{x\in |X|} (1+t\Qlb [[t]])\\
K \mt (L_x(K,t))_{x\in |X|}
\end{ea}
est injectif \cite[1.1.2]{Laumon}.

\subsection{Pureté}

On renvoie à \cite[6.2]{WeilII} pour la notion de complexe pur
(\resp mixte). Soit $w$ un entier. On rappelle que les complexes
mixtes de poids $\le w$ sont stables par $f^*$ et $Rf_!$, et que
les complexes mixtes de poids $\ge w$ sont stables par $Rf^!$ et
$Rf_*$.

On note $\Dbm$ la \cat des complexes mixtes. On note
$\Perm%(X,\Qlb)
$ (\resp $\Per_w%(X,\Qlb)
$) la \cat des \faisx
pervers mixtes (\resp purs de poids $w$), et $\Km%(X,\Qlb)
$ (\resp
$K_w%(X,\Qlb)
$) son groupe de Grothendieck. Alors $\Km$ est aussi le groupe de
Grothendieck de $\Dbm$. D'après \cite[5.1.7(ii) (\resp
5.3.1)]{BBD}, $\Perm%(X,\Qlb)
$ (\resp $\Per_w%(X,\Qlb)
$) est une sous-\cat épaisse de $\Per%(X,\Qlb)
$, donc $\Km%(X,\Qlb)
$ (\resp
$K_w%(X,\Qlb)
$) est le sous-groupe abélien libre de $K%(X,\Qlb)
$ (\ref{eq.K=K(P)}) engendré par les classes d'isomorphie des
\faisx pervers simples
mixtes (\resp purs de poids $w$). Par conséquent $\Km%(X,\Qlb)
=
\oplus_{w\in\Z}%(
K_w%(X,\Qlb))
$. Plus généralement, pour tout
intervalle (éventuellement non borné) $I\subset \Z$, on peut définir $\Per_I%(X,\Qlb)
$ et
$K_I%(X,\Qlb)
$. On a $K_I%(X,\Qlb)
= \oplus_{w\in I}K_w%(X,\Qlb)
$.

\begin{theo}[Gabber]\label{th.pHi}
Soit\/ $K\in \Dbm(X,\Qlb)$. Pour que\/ $K$ soit de poids $\le w$
(\resp $\ge w$), il faut et il suffit que chaque\/ $\pH^iK$ soit
de poids $\le w+i$ (\resp $\ge w+i$). \tnm{\lcit[5.4.1]}
\end{theo}

\begin{coro}[Gabber]\label{th.Gabber}
Soit $f$ un morphisme quasi-fini de schémas séparés \tf sur\/
$\Spec k$. Alors $f_{!*}$ préserve\/ $\Per_I(-,\Qlb)$.
\tnm{\lcit[5.4.3]}
\end{coro}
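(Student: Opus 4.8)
The plan is to reduce the statement to the case of an open immersion and then apply Gabber's theorem \ref{th.pHi} on the behaviour of perverse cohomology with respect to weights. First I would recall that a quasi-finite morphism $f$ of separated schemes of finite type over $\Spec k$ factors, by Zariski's main theorem, as $f = \bar f \circ j$ with $j$ an open immersion and $\bar f$ finite. Since $\bar f$ is finite, $\bar f_!=\bar f_*$ is exact for the perverse $t$-structure and preserves the weight filtration in both directions (it is simultaneously of the form $R\bar f_!$ and $R\bar f_*$), hence it sends $\Per_w$ into itself, and more generally $\Per_I$ into itself. Moreover intermediate extension is compatible with composition in the sense that $f_{!*}=\bar f_* \circ j_{!*}$ when $\bar f$ is finite (this is standard, e.g. from \lcit[2.1.11] combined with exactness of $\bar f_*$). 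So it suffices to treat the case where $f=j$ is an open immersion $j:U\hookrightarrow X$.

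For an open immersion, fix $K\in\Per_I(U,\Qlb)$; by the direct sum decomposition $K_I=\oplus_{w\in I}K_w$ recalled before the theorem, and since $j_{!*}$ is additive on objects up to the fact that it is only a functor (not additive), I would actually argue directly: I claim $j_{!*}K$ is mixte of weights in $I$. The complex $Rj_*K$ is mixte of weights $\ge$ the lower bound of $I$ (stability of mixed complexes of weights $\ge w$ under $Rj_*$), and dually $j_!K$ is mixte of weights $\le$ the upper bound of $I$ (stability under $Rj_!$). Now $j_{!*}K$ is by definition the image of the canonical morphism ${}^pj_!K\to {}^pRj_*K$ in $\Per(X,\Qlb)$; as a subquotient — in fact a quotient of ${}^pj_!K$ and a sub of ${}^pRj_*K$ — it inherits both bounds. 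Concretely: $j_{!*}K$ is a perverse subobject of ${}^pRj_*K = \pH^0(Rj_*K)$, and by Gabber's theorem \ref{th.pHi} applied to $Rj_*K$ (which has weights $\ge \inf I$), the perverse cohomology $\pH^0(Rj_*K)$ has weights $\ge \inf I$; a perverse subobject of a perverse sheaf of weights $\ge \inf I$ again has weights $\ge \inf I$, because $\Per_{\ge \inf I}$ is a thick subcategory (stable under sub- and quotient objects) of $\Per$ — this is exactly the statement recalled just before \ref{th.pHi}. Dually $j_{!*}K$ is a perverse quotient of $\pH^0(j_!K)$, which has weights $\le \sup I$ by \ref{th.pHi} applied to $j_!K$, and a quotient stays in $\Per_{\le\sup I}$. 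Combining, $j_{!*}K\in\Per_I(X,\Qlb)$.

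The main obstacle, and the only point requiring genuine care, is the compatibility $f_{!*}=\bar f_*\circ j_{!*}$ for the factorization through a finite morphism: one must check that $\bar f_*$ of an intermediate extension on $U$ is again an intermediate extension on $X$ with respect to the locally closed immersion $U\hookrightarrow X$. This follows because $\bar f$ is finite hence both $\bar f_!$- and $\bar f_*$-exact, so it commutes with the truncations defining $j_{!*}$ via the formula $j_{!*}K={}^p\tau^{Y}_{\le -1}Rj_*K$ recalled in the text; the genericity of the stratification is preserved since $\bar f$ is proper with finite fibres. Once this is in hand, everything reduces to the open-immersion case treated above, and the weight estimate there is immediate from \ref{th.pHi} together with the thickness of $\Per_{\le w}$ and $\Per_{\ge w}$ in $\Per$. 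One should also note that the hypothesis that $k$ is finite enters only through Gabber's theorem \ref{th.pHi} and the very notion of mixed complex; the formal reductions are valid over any base where those make sense.
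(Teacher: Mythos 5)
Your argument is correct, and it is in substance the proof of \cite[5.4.3]{BBD}; note that the paper itself gives no proof of this corollary at all — it is quoted with the reference \cite[5.4.3]{BBD} — so there is nothing internal to compare against beyond that citation. Two remarks on your write-up. First, the detour through Zariski's main theorem is unnecessary: for any quasi-finite $f$ the fibres have dimension $0$, so for $K$ perverse one has $Rf_!K\in{}^pD^{\le 0}$ and $Rf_*K\in{}^pD^{\ge 0}$, and $f_{!*}K$ is by definition the image of $\pH^0(Rf_!K)\to\pH^0(Rf_*K)$ (for an open immersion this is $j_{!*}$, for a finite morphism it is $f_*$); hence your sub/quotient argument applies verbatim to $f$ itself: $f_{!*}K$ is a quotient of $\pH^0(Rf_!K)$, of weights $\le\sup I$ by stability of weights under $Rf_!$ and Théorème \ref{th.pHi}, and a subobject of $\pH^0(Rf_*K)$, of weights $\ge\inf I$ dually; one concludes by \cite[5.3.1]{BBD} (subquotients of a mixed perverse sheaf of weight $\le w$, resp.\ $\ge w$, are of weight $\le w$, resp.\ $\ge w$ — this, rather than the thickness of $\Perm$ and $\Per_w$ recalled in the text, is the precise statement you need) and by the fact that $I$ is an interval, so weights in $[\inf I,\sup I]$ means weights in $I$. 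Second, if you do insist on factoring $f=\bar f\circ j$, the correct justification of $f_{!*}=\bar f_*\circ j_{!*}$ is the one you give only in passing: $\bar f_*$ is perverse exact and commutes with $\pH^0$ of $Rj_!$ and $Rj_*$, hence carries the image of ${}^pj_!K\to{}^pj_*K$ to the image of $\pH^0(Rf_!K)\to\pH^0(Rf_*K)$; the later argument via the formula $j_{!*}K={}^p\tau^{Y}_{\le-1}Rj_*K$ and ``genericity of the stratification'' is beside the point, and the citation [BBD, 2.1.11] does not seem to be the relevant one. These are presentational issues; the mathematics is sound.
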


\begin{coro}\label{cor.pds}
On garde les notations et les hypothèses de \tnm{\ref{cor.j!*}}
(pour\/ $F=k$). Soit $\aldA$.

\tm{i} Si\/ $K_\al \in \Per_{\le w}(U,\Qlalb)$, alors pour\/ $1\le
m \le n$, $\fH^e(Rj_{m*}K_\al\mm)$ est mixte de poids ponctuels
$\le w-d_m-1$, $\forall e\le -d_m-1$.

\tm{ii} Si\/ $K_\al \in \Per_{\ge w}(U,\Qlalb)$, alors pour\/
$1\le m \le n$, $\fH^e(i_m^* Rj_{m*}K_\al\mm)$ est mixte de poids
ponctuels\/ $\ge w-d_m+1$, $\forall e\ge -d_m$.
\end{coro}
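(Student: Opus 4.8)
The statement is about the purity behaviour of the iterated construction $K_\al^{(m)} = {}^{Y_m}\tau_{\le -d_m-1} Rj_{m*} K_\al^{(m-1)}$ from \ref{cor.j!*}, and the natural strategy is induction on $m$, combined with the stability properties of mixed complexes recalled in the subsection on Pureté and with Gabber's theorem \ref{th.pHi}. First I would treat part (i). Suppose $K_\al \in \Per_{\le w}(U,\Qlalb)$. By \ref{cor.j!*}, $K_\al^{(m-1)} = j_{m-1\,!*}\cdots j_{1\,!*} K_\al$, so by Gabber's corollary \ref{th.Gabber} (applied to the quasi-finite — indeed open — immersions $j_1,\dots,j_{m-1}$) it is still a perverse sheaf of weights $\le w$ on $U_{m-1}$, i.e.\ $K_\al^{(m-1)} \in \Per_{\le w}(U_{m-1},\Qlalb) \subset \pDle{w}$. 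The point is then that $Rj_{m*}$ preserves ``mixed of weights $\le w$'' only in the perverse sense: since $Rj_{m*}$ sends complexes of weights $\ge w$ to complexes of weights $\ge w$, and there is no general $!*$-stability of a single $Rj_*$, one must use Gabber's \ref{th.pHi} on the ordinary cohomology sheaves. Concretely, $K_\al^{(m-1)}$ being a perverse sheaf of weights $\le w$ means that (by definition of $\pDle{w}$ and the description of the perverse $t$-structure) for every point $y \in U_{m-1}$ and every $e$, $\fH^e(i_y^* K_\al^{(m-1)})$ is mixed of weights $\le w + e + \delta(y)$; one feeds this through $Rj_{m*}$ on the smooth locus $Y_m$, using that $Y_m$ is smooth purely of dimension $d_m$ so that $\delta$ is constant equal to $d_m$ on $Y_m$, and that the relevant $\fH^e(i_m^* Rj_{m*} K_\al^{(m-1)})$ for $e \ge -d_m$ are lisse by the hypothesis in \ref{cor.j!*}(i).

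The cleanest way to organise the bookkeeping is to pass through the two distinguished triangles relating $Rj_{m*}$, $j_{m!}$ and the intermediate extension $j_{m!*} = {}^{Y_m}\tau_{\le -d_m-1} Rj_{m*}$. From $K_\al^{(m-1)} \in \Per_{\le w}(U_{m-1},\Qlalb)$ one gets $j_{m!} K_\al^{(m-1)} \in \Per_{\le w}(U_m,\Qlalb)$ (open immersions are quasi-finite, so this is again \ref{th.Gabber}, or directly the stability of weights $\le w$ under $Rf_!$), and $K_\al^{(m)} = j_{m!*} K_\al^{(m-1)}$ has weights $\le w$ by \ref{th.Gabber}. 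Now consider the triangle $i_{m*} i_m^! Rj_{m*}K_\al^{(m-1)} \to Rj_{m*}K_\al^{(m-1)} \to Rj_{m*}K_\al^{(m-1)}|_{U_{m-1}} \to$, or rather the triangle expressing $Rj_{m*}K_\al^{(m-1)}$ in terms of its truncation $K_\al^{(m)} = {}^{Y_m}\tau_{\le -d_m-1}Rj_{m*}K_\al^{(m-1)}$ and the complement ${}^{Y_m}\tau_{\ge -d_m}$ supported on $Y_m$. In the range $e \le -d_m-1$ the cohomology sheaves $\fH^e(Rj_{m*}K_\al^{(m-1)})$ agree with those of $K_\al^{(m)}$, which is a perverse sheaf on $U_m$ of weights $\le w$; since $Y_m$ is smooth of dimension $d_m$, the restriction to $Y_m$ of a weight-$\le w$ perverse sheaf has $\fH^e(i_m^* -)$ mixed of weights $\le w + e + d_m \le w - 1$ for $e \le -d_m-1$ — this is exactly $\le w - d_m - 1$ when $e = -d_m-1$ and better for smaller $e$; taking the worst case gives the bound $\le w - d_m - 1$ stated. (One should double-check whether the intended bound is pointwise in $e$ or uniform; the statement says ``de poids ponctuels $\le w-d_m-1$, $\forall e \le -d_m-1$'', which matches the $e=-d_m-1$ extremal case and is implied for smaller $e$ a fortiori.) Part (ii) is the Verdier-dual assertion: if $K_\al \in \Per_{\ge w}(U,\Qlalb)$, then $K_\al^{(m-1)} \in \Per_{\ge w}(U_{m-1},\Qlalb)$ by \ref{th.Gabber} again, $Rj_{m*}$ preserves weights $\ge w$ on the nose (stability of ``mixed of weights $\ge w$'' under $Rf_*$), and then $i_m^* Rj_{m*}K_\al^{(m-1)}$, restricted to the smooth $d_m$-dimensional $Y_m$ and in degrees $e \ge -d_m$, has its cohomology sheaves lisse (hypothesis of \ref{cor.j!*}) and mixed of weights $\ge w + e + d_m \ge w - d_m + 1$ when $e \ge -d_m$ is combined with... wait, $e = -d_m$ gives $\ge w$, not $\ge w - d_m + 1$; the correct reading is that $i_m^*Rj_{m*}K_\al^{(m-1)}$ lives in $\pDge{w}$ and on $Y_m$ this translates to $\fH^e$ mixed of weights $\ge w + e + d_m$, and since we also know (from the triangle defining $j_{m!*}$) that the relevant part is concentrated in degrees $\ge -d_m$, taking $e \ge -d_m$ we get $\ge w + (-d_m) + d_m = w$; to get the sharper $\ge w - d_m + 1$ one uses in addition that this truncated-above part, being a quotient in the perverse sense by a subobject living in $\pDge{}$ of smaller amplitude, actually has strictly positive $e$-shift — I will need to reconstruct the precise numerics from \cite[5.1.14]{BBD} or the explicit perverse $t$-structure formulas.

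The main obstacle I anticipate is precisely this numerical bookkeeping: translating ``perverse sheaf mixed of weights $\le w$ (resp.\ $\ge w$)'' into pointwise weight bounds on ordinary cohomology sheaves over the \emph{smooth} stratum $Y_m$, and keeping track of how the truncation functors ${}^{Y_m}\tau_{\le -d_m-1}$ shift these bounds. Over a smooth purely $d_m$-dimensional $Y_m$, a lisse sheaf $\cG$ placed in perverse degree $\le 0$ sits in cohomological degree $\le -d_m$, so the dictionary is: $K \in \pDle{w}$ and $\fH^e(i_m^*K)$ lisse $\Rightarrow$ $\fH^e(i_m^*K)$ mixed of weights $\le w + e + d_m$. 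Once this dictionary is nailed down, parts (i) and (ii) fall out by plugging in $e = -d_m - 1$ and $e = -d_m$ respectively; the remaining subtlety for (ii) is whether the claimed $+1$ improvement (weights $\ge w - d_m + 1$ rather than $\ge w - d_m$) requires the extra input that $\fH^{-d_m}(i_m^* Rj_{m*}K_\al^{(m-1)})$ in fact \emph{vanishes} or is pure — which it need not be in general, so I suspect the intended statement uses that $i_m^* Rj_{m*}$ restricted to this degree is the restriction of the perverse sheaf $K_\al^{(m)}$ plus a correction, and one should argue via the triangle for $j_{m!*}$ that the degree-$(-d_m)$ part contributing has weights $\ge w - d_m + 1$ because it is a genuine quotient object. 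I would therefore spend the bulk of the write-up making the smooth-stratum weight dictionary precise and then deriving both bounds uniformly from it.
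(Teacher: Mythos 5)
Your part (i) has the right skeleton — for $e\le -d_m-1$ the sheaves $\fH^e(Rj_{m*}K_\al^{(m-1)})$ coincide with $\fH^e(K_\al^{(m)})$, and $K_\al^{(m)}=j_{m!*}\cdots j_{1!*}K_\al$ has weights $\le w$ by \ref{th.Gabber} — but your ``dictionary'' is wrong and makes the numerics collapse. For a mixed complex of weights $\le w$, the very definition (Weil II, 6.2) says that $\fH^e$ has punctual weights $\le w+e$; there is no $+d_m$ shift, and no smoothness or lisseness is needed for this direction. Your posited bound $\le w+e+d_m$ gives $\le w-1$ at $e=-d_m-1$, which is not the stated $\le w-d_m-1$ (your text asserts both at once, which is inconsistent). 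With the correct dictionary, (i) is immediate, and this is exactly the paper's proof.

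Part (ii) is where the genuine gap lies. Your route — $Rj_{m*}$ preserves weights $\ge w$, then apply $i_m^*$ and read off pointwise lower bounds on $\fH^e$ — fails twice: $i_m^*$ preserves only \emph{upper} weight bounds (it is $Ri_m^!$ that preserves lower ones), and ``mixte de poids $\ge w$'' carries no direct pointwise statement about ordinary cohomology sheaves, being defined via duality; you notice the numbers do not come out and leave the source of the $+1$ unexplained (note also that $w-d_m+1$ is \emph{weaker} than $w$, not sharper). The missing idea is the paper's: apply $i_m^*$ to the triangle $M\to Rj_{m*}j_m^*M\to$ with $M=K_\al^{(m)}$ to get $Ri_m^!K_\al^{(m)}\to i_m^*K_\al^{(m)}\to i_m^*Rj_{m*}K_\al^{(m-1)}\to$. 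Since $i_m^*K_\al^{(m)}=\tau_{\le -d_m-1}\,i_m^*Rj_{m*}K_\al^{(m-1)}$ is concentrated in degrees $\le -d_m-1$, the long exact sequence gives $\fH^e(i_m^*Rj_{m*}K_\al^{(m-1)})\simeq\fH^{e+1}(Ri_m^!K_\al^{(m)})$ for $e\ge -d_m$. Now $K_\al^{(m)}\in\Per_{\ge w}(U_m,\Qlalb)$ by \ref{th.Gabber}, $Ri_m^!$ preserves weights $\ge w$, and the cohomology sheaves of $Ri_m^!K_\al^{(m)}$ are lisse by the hypothesis recorded in \ref{cor.j!*}(i); on the smooth, purely $d_m$-dimensional $Y_m$, lisseness is precisely what lets one translate ``weights $\ge w$'' into the pointwise bound $\fH^{e+1}$ of weights $\ge w+e+1$ (duality is computed termwise there). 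The degree shift $e\mapsto e+1$ in the long exact sequence is the source of the $+1$ in $w-d_m+1$; your proposal never isolates this identification, so as written it does not prove (ii).
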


\begin{proof}
$K_\al\m = \tYle[m]{-d_m-1} Rj_{m*}K_\al\mm$.

(i) D'après \ref{th.Gabber}, $K_\al\m \in \Per_{\le w}(U_m,
\Qlalb)$. Pour $e \le -d_m-1$,
\[\fH^e(Rj_{m*}K_\al\mm) =
\fH^e(K_\al\m)\] est mixte de poids $\le w+e \le w-d_m-1$.

(ii) D'après \ref{th.Gabber}, $K_\al\m \in \Per_{\ge w}(U_m,
\Qlalb)$. On a un triangle \dist
\[Ri_m^! K_\al\m \to i_m^* K_\al\m \to i_m^*Rj_{m*} K_\al\mm \to,\]
déduit du triangle \dist $Ri_m^!M\to i_m^*M \to
i_m^*Rj_{m*}j_m^*M\to$ pour $M\in \Dbc(U_m,\Qlalb)$. Pour $e\ge
-d_m$, $\fH^e(i_m^*Rj_{m*}K_\al\mm) =\fH^{e+1}(Ri_m^! K_\al\m)$ est
donc mixte de poids ponctuels $\ge w+e+1\ge w-d_m+1$, car $Ri_m^!
K_\al\m$ est mixte de poids $\ge w$ et à \faisx de cohomologie
lisses.
\end{proof}

\begin{lemm}\label{lm.t}
Soient\/ $D$ une \cat triangulée munie d'une \tstr $(\PDle0,
\PDge0)$, $K\sto[f] L \sto[g] M \sto[h]$ un triangle \dist dans\/
$D$, $d\in \Z$ \tq le cobord\/ $\PH^d(h) : \PH^d(M) \to
\PH^{d+1}(K)$ soit nul. Alors on a un diagramme des 9
\beq\label{9dia.1}
\xymatrix{\Ptled K \ar[d]\ar[r]^{\Ptled f} &\Ptled L
\ar[d]\ar[r]^{\Ptled g} &\Ptled M\ar[d]\ar[r] &\\
K\ar[d]\ar[r]^f & L\ar[d]\ar[r]^g & M\ar[d]\ar[r]^h &\\
\Ptpgd K\ar[d]\ar[r]^{\Ptpgd f} & \Ptpgd L\ar[d]\ar[r]^{\Ptpgd g} & \Ptpgd M\ar[d]\ar[r] &\\
&&&}
\eeq
dont les colonnes sont des triangles distingués canoniques.
\end{lemm}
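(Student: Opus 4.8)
The plan is to build the diagram of nine one column and one row at a time, using the truncation triangles of the $P$-structure together with the octahedron axiom. First I would write down the three canonical distinguished triangles $\Ptled N \to N \to \Ptpgd N \to$ for $N = K, L, M$; these provide the three columns, and the middle row is the given triangle $K \sto[f] L \sto[g] M \sto[h]$. The task is then to produce the top and bottom rows, i.e.\ the morphisms $\Ptled f$, $\Ptled g$, $\Ptpgd f$, $\Ptpgd g$ and the connecting maps, so that all three resulting rows are distinguished triangles and every square commutes. Functoriality of $\Ptled$ and $\Ptpgd$ as (non-additive but) canonical operations on $D$ gives the morphisms and the commutativity of the four squares in the left two columns automatically; the real content is the existence of the triangle structure on the rows and the compatibility of the connecting morphisms.

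The key step is the following: apply the octahedron axiom to the composite $\Ptled K \to K \sto[f] L$, or more precisely run the standard "$3\times 3$ diagram" construction of \cite[1.1.11]{BBD}. Concretely, I would first complete $\Ptled K \to K \to \Ptpgd K \to$ and $\Ptled L \to L \to \Ptpgd L \to$, then use that $\Hom(\Ptled K, (\Ptpgd L)[-1]) $ controls whether $\Ptled f$ lifts; but because $\Ptled K \in \PDle d$ and $(\Ptpgd L)[j] \in \PDge{d+1-j} \subset \PDge{1}$-type ranges, the relevant $\Hom$ and $\Hom^{1}$ groups between $\Ptled K$ and $\Ptpgd L$ vanish, so $f$ induces a \emph{unique} morphism $\Ptled f$ and a \emph{unique} $\Ptpgd f$ making the left square commute, and similarly for $g$. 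Then I would invoke the $3\times 3$ lemma: given the two columns (triangles for $K$ and $L$) and the map of triangles between them induced by $f$, there is a third column and the rows are distinguished, with the bottom-right object being a cone on $\Ptled g$ (equivalently on $\Ptpgd f$). The point where the hypothesis enters is that this "third column" must be identified with the canonical triangle $\Ptled M \to M \to \Ptpgd M \to$: a priori the $3\times 3$ lemma only gives \emph{some} triangle $M' \to M \to M'' \to$ with $M' \in \PDle d$ (as a cone of a map between objects of $\PDle d$, using $\PDle d$ stable under extensions) and $M'' \in \PDge{d+1}$. By uniqueness of the truncation triangle this forces $M' = \Ptled M$, $M'' = \Ptpgd M$ — \emph{provided} the composite $\Ptled L \to L \sto[g] M \to \Ptpgd M$ vanishes, which unwinds exactly to the condition that the cobord $\PH^d(h)\colon \PH^d(M)\to \PH^{d+1}(K)$ is zero; this is where I expect the main obstacle to lie, since without the hypothesis one only gets a triangle whose first term lies in $\PDle{d+1}$ rather than in $\PDle d$.

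Finally I would check commutativity and canonicity of the connecting morphisms. The bottom connecting map $\Ptpgd M \to \Ptled K[1]$ is obtained from the $3\times 3$ construction; to see it agrees with the canonical one I would again use the vanishing of $\Hom$-groups forced by the $t$-structure (maps out of something in $\PDge{d+1}$ into something in $\PDle d [1] \subset \PDle{d+1}$ are unique up to the obstruction space, which vanishes in the relevant degree), so all choices are rigid and the diagram is canonical. The top row's distinguishedness follows symmetrically, or by the nine-diagram lemma applied the other way. I would present the argument in the order: (1) canonical columns; (2) uniqueness of $\Ptled f, \Ptpgd f$ etc.\ from $\Hom$-vanishing; (3) the $3\times 3$/octahedron step producing the third column; (4) identification of that column with the canonical truncation of $M$, using the hypothesis $\PH^d(h)=0$; (5) canonicity of connecting maps. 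Step (4) is the crux and the only place the hypothesis is used.
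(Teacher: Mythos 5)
Your overall strategy is the paper's: complete the commutative square formed by the canonical truncation triangles of $K$ and $L$ and the map induced by $f$ into a nine-diagram (\cite[1.1.11]{BBD}), obtaining a third column $M_1\to M\to M_2\to$, then identify that column with the canonical truncation triangle of $M$ by the uniqueness statement \cite[1.1.9]{BBD}. But your step (4) — the only place the hypothesis $\PH^d(h)=0$ can enter — is handled incorrectly, on three counts. First, what is automatic is $M_1\in\PDle{d}$ (cone of a map between objects of $\PDle{d}$) but only $M_2\in\PDge{d}$, \emph{not} $M_2\in\PDge{d+1}$: $M_2$ is an extension of $(\Ptpgd K)[1]\in\PDge{d}$ by $\Ptpgd L\in\PDge{d+1}$. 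Second, the obstruction you propose, the vanishing of the composite $\Ptled L\to L\sto[g] M\to\Ptpgd M$, is automatically zero since $\Hom(\PDle{d},\PDge{d+1})=0$ by the axioms of la $t$-structure; it therefore cannot ``unwind to'' $\PH^d(h)=0$ (and $\Ptpgd M$ should not even appear at this stage, since one does not yet know $M_2\simeq\Ptpgd M$). Third, your closing remark that without the hypothesis the first term lies only in $\PDle{d+1}$ contradicts your own (correct) observation that it always lies in $\PDle{d}$; the failure is at the other end of the column.

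The missing argument, which is exactly what the paper supplies, is the long-exact-sequence computation showing $\PH^d M_2=0$: the bottom row $\Ptpgd K\to\Ptpgd L\to M_2\to$ gives, since $\PH^d(\Ptpgd L)=0$, an injection $\PH^d M_2\hookrightarrow\PH^{d+1}(\Ptpgd K)\simeq\PH^{d+1}K$; the right column gives, since $\PH^{d+1}M_1=0$, a surjection $\PH^d M\twoheadrightarrow\PH^d M_2$; and the compatibilities of the nine-diagram identify the composite $\PH^d M\to\PH^d M_2\to\PH^{d+1}K$ with $\PH^d(h)$, which is zero by hypothesis. Hence $M_2\in{}^PD^{>d}$, and only then does \cite[1.1.9]{BBD} identify $(M_1\to M\to M_2)$ with $(\Ptled M\to M\to\Ptpgd M)$, giving (\ref{9dia.1}). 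Your steps (1)--(3) and (5) are sound (existence and rigidity of the induced and connecting maps do follow from the orthogonality you invoke), but as written the proposal does not establish the one point where the hypothesis is used, and the correct mechanism differs from the one you describe.
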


\begin{proof}
On complète le \diacom
\[\xymatrix{\Ptled K \ar[d]\ar[r]^{\Ptled f} &\Ptled L\ar[d]\\
K\ar[r]^f &L}\] en un diagramme des 9
\beq\label{9dia.2}
\xymatrix{\Ptled K \ar[d]\ar[r]^{\Ptled f} &\Ptled L
\ar[d]\ar[r] & M_1\ar[r]\ar[d] &\\
K\ar[d]\ar[r]^f & L\ar[d]\ar[r]^g & M\ar[d]\ar[r]^h &\\
\Ptpgd K\ar[d]\ar[r] & \Ptpgd L\ar[d]\ar[r] & M_2\ar[d]\ar[r] &\\
&&&}
\eeq
dont les deux premières colonnes sont des trianlges distingués
canoniques. La première (\resp troisième) ligne implique que
$M_1\in \PDle d$ (\resp $M_2\in \PDge d$). Le diagramme des suites
exactes longues donne alors un \diacom
\[\xymatrix{&\PH^d M\ar[d]\ar[r]^{%\PH^d(h)=
0}& \PH^{d+1}K\ar[d]\\
0\ar[r]& \PH^d M_2 \ar[d]\ar[r] &\PH^{d+1}K \\
&0}\] qui implique $\PH^d M_2 =0$, donc $M_2\in {}^PD^{>d}$. En
utilisant \cite[1.1.9]{BBD}, on voit alors que (\ref{9dia.2})
s'identifie à un diagramme de la forme (\ref{9dia.1}).
\end{proof}

\begin{coro} Soit $f:Z\to X$ un morphisme quasi-fini. Alors le
foncteur
\[f_{!*} : \Per_{[w,w+1]}(Z,\Qlb) \to \Per_{[w,w+1]}(X,\Qlb)\]
est exact.
\end{coro}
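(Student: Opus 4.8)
The plan is to reduce to an open immersion, to locate the obstruction to exactness in a single perverse sheaf supported on the complement, and to kill it with a \emph{strict} weight estimate based on Gabber's theorem~\ref{th.pHi}.

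First, since $f$ is quasi-finite and separated, Zariski's main theorem factors it as $Z\har[j]W\sto[g]X$ with $j$ an open immersion and $g$ finite; as $Rg_*=Rg_!$ is perverse $t$-exact and preserves weights on both sides, commuting $\pH^0$ past it in $f_{!*}A=\mathrm{im}({}^pf_!A\to{}^pf_*A)$ gives $f_{!*}=Rg_*\circ j_{!*}$, and $Rg_*$ restricts to an exact functor $\Per_{[w,w+1]}(W,\Qlb)\to\Per_{[w,w+1]}(X,\Qlb)$. So it suffices to treat an open immersion $j\colon U\hookrightarrow X$; write $i\colon Y\hookrightarrow X$ for the closed complement, and for $A\in\Per(U,\Qlb)$ put ${}^pj_!A={}^pH^0(Rj_!A)$, ${}^pRj_*A={}^pH^0(Rj_*A)$, so that $j_{!*}A=\mathrm{im}({}^pj_!A\to{}^pRj_*A)$ is a quotient of ${}^pj_!A$ and a subobject of ${}^pRj_*A$ (and lands in $\Per_{[w,w+1]}$ by~\ref{th.Gabber}).

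Let $0\to K\to L\sto[b]M\to0$ be exact in $\Per_{[w,w+1]}(U,\Qlb)$. Since ${}^pRj_*$ is left exact and ${}^pj_!$ right exact, $j_{!*}$ preserves monomorphisms and epimorphisms, so $0\to j_{!*}K\to j_{!*}L\sto[j_{!*}b]j_{!*}M\to0$ is a complex, exact away from the middle; setting $\mathcal D:=\ker(j_{!*}b)/j_{!*}K$ we have $j^*\mathcal D=0$, so $\mathcal D$ is perverse and supported on $Y$, and exactness is equivalent to $\mathcal D=0$. A short diagram chase (using right exactness of ${}^pj_!$ with the surjection ${}^pj_!\twoheadrightarrow j_{!*}$, resp. left exactness of ${}^pRj_*$ with the injection $j_{!*}\hookrightarrow{}^pRj_*$) exhibits $\mathcal D$ as a quotient of $M_!:=\ker({}^pj_!M\to{}^pRj_*M)$ and as a subobject of $K_*:=\mathrm{coker}({}^pj_!K\to{}^pRj_*K)$; these two statements are interchanged by $D_X$ via $D_Xj_{!*}=j_{!*}D_U$.

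The heart of the matter — and the step I expect to cost the most — is the strict weight drop: if $N\in\Per_{\le a}(U,\Qlb)$ then $N_!\in\Per_{\le a-1}(X,\Qlb)$, equivalently (by duality) if $N\in\Per_{\ge a}(U,\Qlb)$ then $N_*\in\Per_{\ge a+1}(X,\Qlb)$. For the second form, write $N_*=i_*\bar N_*$ with $\bar N_*\in\Per(Y,\Qlb)$. From $Ri^!Rj_*=0$ and the triangle ${}^pRj_*N\to Rj_*N\to\ptpg{0}Rj_*N\to$, left $t$-exactness of $Ri^!$ yields $Ri^!\,{}^pRj_*N\cong Ri^!(\ptpg{0}Rj_*N)[-1]\in\pDge2(Y)$. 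Applying $Ri^!$ to $j_{!*}N\to{}^pRj_*N\to i_*\bar N_*\to$ and using $Ri^!j_{!*}N\in\pDge1(Y)$ (the defining property of the intermediate extension, dual to $i^*j_{!*}N\in\pDle{-1}(Y)$), the long exact perverse-cohomology sequence on $Y$ collapses to an isomorphism $\bar N_*\cong\pH^1(Ri^!j_{!*}N)$. Since $N\in\Per_{\ge a}$ forces $j_{!*}N\hookrightarrow{}^pRj_*N={}^pH^0(Rj_*N)\in\Per_{\ge a}(X,\Qlb)$ (as $Rj_*$ preserves weights $\ge a$, by~\ref{th.pHi}), the complex $Ri^!j_{!*}N$ has weights $\ge a$, so $\pH^1(Ri^!j_{!*}N)\in\Per_{\ge a+1}$ again by~\ref{th.pHi}; hence $N_*\in\Per_{\ge a+1}(X,\Qlb)$. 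Now apply this: $M\in\Per_{\le w+1}$ gives $M_!\in\Per_{\le w}$, so $\mathcal D\in\Per_{\le w}$; $K\in\Per_{\ge w}$ gives $K_*\in\Per_{\ge w+1}$, so $\mathcal D\in\Per_{\ge w+1}$. Therefore $\mathcal D\in\Per_{\le w}\cap\Per_{\ge w+1}=0$, the sequence $0\to j_{!*}K\to j_{!*}L\to j_{!*}M\to0$ is exact, and $f_{!*}$ is exact on $\Per_{[w,w+1]}$. (Alternatively, Lemma~\ref{lm.t}, applied on $\Dbc(Y,\Qlb)$ to $i^*Rj_*K\to i^*Rj_*L\to i^*Rj_*M\to$ with $d=-1$, leads to the distinguished triangle $j_{!*}K\to j_{!*}L\to j_{!*}M\to j_{!*}K[1]$ on $X$ — three perverse terms, hence again a short exact sequence — once the cobord $\pH^{-1}(i^*Rj_*M)\to\pH^0(i^*Rj_*K)$ is seen to vanish; and indeed $\pH^{-1}(i^*Rj_*M)=\pH^{-1}(i^*j_{!*}M)\in\Per_{\le w}$ while $\pH^0(i^*Rj_*K)\cong\pH^1(Ri^!j_{!*}K)\in\Per_{\ge w+1}$, by the same weight arguments.)
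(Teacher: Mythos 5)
Your proof is correct, but it takes a genuinely different route from the paper's. The paper invokes Corollaire~\ref{cor.j!*} to choose a stratification $U=U_0\subset\cdots\subset U_n=X$ along which $j_{!*}$ is computed by iterated ordinary truncations $\tle[Y_m]{-d_m-1}Rj_{m*}$, then proves exactness of $0\to K_1\m\to K_2\m\to K_3\m\to 0$ by induction on $m$: at each step it applies Lemme~\ref{lm.t} to the glued $t$-structure on $U_m$ and kills the coboundary $\PH^{-d_m-1}(h)$ by the pointwise weight estimates of Corollaire~\ref{cor.pds} (weights $\le w-d_m$ against weights $\ge w-d_m+1$ on the lisse stratum $Y_m$). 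You instead stay with the intrinsic description $j_{!*}=\mathrm{im}({}^pj_!\to{}^pRj_*)$, identify the middle homology $\mathcal D$ of the image complex as a quotient of $\ker({}^pj_!M\to{}^pRj_*M)$ and a subobject of $\mathrm{coker}({}^pj_!K\to{}^pRj_*K)$, and kill it by the strict weight shift of these objects, obtained from $Ri^!Rj_*=0$, the characterization $Ri^!j_{!*}N\in\pDge{1}(Y)$, and Théorème~\ref{th.pHi}; this is essentially the weight formalism of \cite[\S 5.3]{BBD}. Your argument is shorter and avoids both the stratification and the nine-lemma; the paper pays that price because \ref{cor.j!*} and \ref{cor.pds} are needed anyway for Théorème~\ref{th.f!*}, where the explicit computation of $j_{!*}$ by truncations (and the resulting stratum-by-stratum control of local $L$-factors) is essential, so the exactness proof there comes almost for free from machinery already in place. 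Two small points: the diagram chase producing $\mathcal D\cong\mathrm{coker}(L_!\to M_!)$ and $\mathcal D\cong\ker(K_*\to L_*)$ is only asserted, but it is the routine long exact sequence for the surjection of three-term complexes $({}^pj_!K\to{}^pj_!L\to{}^pj_!M)\twoheadrightarrow(j_{!*}K\to j_{!*}L\to j_{!*}M)$ and for the dual injection into the ${}^pRj_*$-complex; and in your final parenthetical, Lemme~\ref{lm.t} should be applied on $\Dbc(X,\Qlb)$ with the $t$-structure glued from the degenerate one on $U$ and the perverse one on $Y$ (as the paper does), not on $\Dbc(Y,\Qlb)$ --- the triangle $i^*Rj_*K\to i^*Rj_*L\to i^*Rj_*M$ lives on $Y$ and does not by itself reconstruct the $j_{!*}$'s on $X$. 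That aside, your main argument stands as written.
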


\begin{proof}
Le cas $f$ fini étant trivial, on peut supposer que $f$ soit une
immersion ouverte $j:U\har X$. On se donne une suite exacte courte
$0\to K_1 \sto[g_1] K_2 \sto[g_2] K_3 \to 0$ dans
$\Per_{[w,w+1]}(U,\Qlb)$, et on cherche à montrer que la suite
déduite $0\to j_{!*}K_1 \to j_{!*}K_2 \to j_{!*}K_3 \to 0$ est
exacte.

On prend des $U_m$, $0\le m \le n$, comme dans \ref{cor.j!*} et on
montre l'exactitude de
\beq\label{eq.se}
0\to K_1\m \sto[g_1\m] K_2\m \sto[g_2\m] K_3\m \to 0
\eeq
par récurrence sur $m$. Le cas $m=0$ est tautologique. On suppose
l'exactitude de (\ref{eq.se}) établie pour $m-1$, $1\le m\le n$.
Alors on a un triangle \dist
\[Rj_{m*} K_1\mm \sto[Rj_{m*} g_1\mm] Rj_{m*} K_2\mm \sto[Rj_{m*} g_2\mm]
Rj_{m*} K_3\mm \sto[h].\]
\Dpr \ref{cor.j!*},
\begin{ea}
K_\al\m &=& \tYle[m]{-d_m-1} Rj_{m*} K_\al\mm, \al=1,2,3,\\
g_\al\m &=& \tYle[m]{-d_m-1} Rj_{m*} g_\al\mm, \al=1,2.
\end{ea}
Par \dfn, $\tYle[m]{-d_m-1}= \Ptle{-d_m-1}$, où $P$ est la \tstr
sur $U_m$ obtenue par recollement des $t$-structures
$(\Dbc(U_m,\Qlb),0)$ et \[(\Dle0(Y_m,\Qlb),\Dge0(Y_m,\Qlb)).\]
Notons \lcit[1.4.13]
\begin{ea}
\PH^{-d_m-1}(Rj_{m*}K_3\mm) &=&
i_{m*}\fH^{-d_m-1}(i_m^*Rj_{m*}K_3\mm), \\
\PH^{-d_m}(Rj_{m*}K_1\mm) &=&
i_{m*}\fH^{-d_m}(i_m^*Rj_{m*}K_1\mm).
\end{ea}
\Dpr \ref{cor.pds},
$\fH^{-d_m-1}(i_m^* Rj_{m*} K_3\mm)$ est mixte de poids ponctuels
$\le w-d_m$, $\fH^{-d_m}(i_m^* Rj_{m*} K_1\mm)$ est mixte de poids
ponctuels $\ge w-d_m+1$. Donc $\PH^{-d_m-1}(h)=0$. \Dpr
\ref{lm.t}, on a alors un triangle \dist
\[K_1\m \sto[g_1\m] K_2\m \sto[g_2\m] K_3\m \to,\]
qui donne l'exactitude de (\ref{eq.se}).
\end{proof}

\begin{remq}\label{rm.[f!*]}
L'opération $f_{!*}$ sur $\Per_w(-,\Qlb)$ induit donc un
homomorphisme des groupes de Grothendieck $K_w(-,\Qlb)$, et on
définit un homomorphisme $f_{!*}$ sur $\Km(X,\Qlb)$ en prenant la
somme directe. On a alors un \diacom
\[\xymatrix{\Ob(\Per_{[w,w+1]}(Z,\Qlb))\ar[d]\ar[r]^{f_{!*}} &
\Ob(\Per_{[w,w+1]}(X,\Qlb))\ar[d]\\
\Km(Z,\Qlb)\ar[r]^{f_{!*}} & \Km(X,\Qlb)}
\]

Quand $f$ est fini, cette \dfn coïncide avec le $f_*$ virtuel
restreint à $\Km$.
\end{remq}

\begin{prop}\label{pr.dec1}
Soient\/ %$K$ un faisceau pervers pur sur $X$
$K\in \Per_w(X,\Qlb)$, $i: Y\hookrightarrow X$ une immersion
fermée et $j: U \har X$ l'ouvert \compl. Alors\/ $K$ admet une
unique décomposition\/ $K=j_{!*} K'\oplus i_* K''$, où\/ $K'\in
\Per_w(U,\Qlb)$, $K''\in\Per_w(Y,\Qlb)$. \tnm{\lcit[5.3.11]}
\end{prop}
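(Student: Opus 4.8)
On suivra \cite[5.3.11]{BBD}. Posons $K' = j^* K$ : comme $j$ est une immersion ouverte on a $j^* = Rj^!$, de sorte que $K'$ est à la fois mixte de poids $\le w$ et de poids $\ge w$, donc pur de poids $w$ sur $U$. La stratégie sera d'établir la décomposition voulue d'abord sur $\kb$, de la descendre à $k$, puis d'identifier ses deux facteurs ; on notera que la pureté de ces facteurs en résultera automatiquement, ce seront en effet des facteurs directs de $K$, pur de poids $w$.

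Sur $\kb$, le théorème de semi-simplicité de Gabber \cite[5.3.8]{BBD} donne que $K_\kb$ est somme directe de faisceaux pervers simples, extensions intermédiaires de systèmes locaux irréductibles portés par des sous-schémas lisses irréductibles localement fermés $Z_\lambda \subset X_\kb$. En regroupant ces facteurs suivant que $Z_\lambda \subset Y_\kb$ ou non, on écrit $K_\kb = M' \oplus M''$, où $M''$ est la somme des facteurs simples à support dans $Y_\kb$ et $M'$ la somme des autres. Comme $Y$ provient de $k$, le Frobenius $\Fr$ préserve la condition d'avoir son support dans $Y_\kb$, donc fixe l'idempotent de $\operatorname{End}_{\Per(X_\kb,\Qlb)}(K_\kb)$ associé à cette décomposition. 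Je descendrai alors cet idempotent : l'application naturelle $\operatorname{End}_{\Per(X,\Qlb)}(K) \to \operatorname{End}_{\Per(X_\kb,\Qlb)}(K_\kb)^{\Fr}$ est bijective, car la descente galoisienne le long de $\kb/k$ (\cf \cite[5.1.2]{BBD}) l'encadre par $\Hom_{\Dbc(X_\kb,\Qlb)}(K_\kb, K_\kb[-1])$, lequel est nul puisque $K_\kb$ est pervers. On en tire une décomposition $K = K_1 \oplus K_2$ dans $\Per(X,\Qlb)$ relevant $K_\kb = M' \oplus M''$.

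Reste à identifier les facteurs. Le support de $K_2$ est contenu dans $Y$ (cette inclusion se vérifiant après extension des scalaires à $\kb$, où elle vaut pour $M''$) ; donc $K_2 = i_* K''$ avec $K'' = i^* K_2 = Ri^! K_2 \in \Per(Y,\Qlb)$, le foncteur $i_*$ identifiant $\Per(Y,\Qlb)$ à la sous-catégorie pleine des faisceaux pervers sur $X$ à support dans $Y$ ; étant facteur direct de $K$, $K_2$ est pur de poids $w$, donc $K''$ aussi. Comme $j^* K_2 = 0$, on a $j^* K_1 = j^* K = K'$ ; de plus $K_1$ n'a ni sous-objet ni quotient non nul à support dans $Y$, faute de quoi l'extension à $\kb$ en fournirait un pour $M'$, contredisant l'absence de facteur simple de $M'$ à support dans $Y_\kb$. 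Par la caractérisation de l'extension intermédiaire \cite[1.4.25]{BBD}, $K_1 \cong j_{!*}K'$. Pour l'unicité : dans toute décomposition $K = j_{!*}K' \oplus i_* K''$, on a $K' = j^* K$ (appliquer $j^*$) ; le facteur $i_* K''$ est nécessairement le plus grand sous-objet de $K$ à support dans $Y$ (tout tel sous-objet s'envoie sur $0$ dans $j_{!*}K'$, qui n'admet pas de sous-objet non nul à support dans $Y$), donc est déterminé, ainsi que $K''$ ; et le supplémentaire $j_{!*}K'$ l'est aussi puisque $\Hom(j_{!*}K', i_* K'') = 0$.

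Le point central --- et, dans l'argument ci-dessus, le seul endroit où intervient la pureté de $K$ --- est le théorème de semi-simplicité de Gabber sur $\kb$, de même nature que \ref{th.pHi} ; l'étape de descente, elle, ne requiert que la perversité de $K$.
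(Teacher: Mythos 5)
Your proof is correct. The paper offers no argument for this statement — it simply refers to \cite[5.3.11]{BBD} — and what you have written is essentially the proof of \emph{loc.\ cit.}: semisimplicity of $K_\kb$ (Gabber, \cite[5.3.8]{BBD}), the remark that the splitting according to supports is canonical and hence Frobenius-equivariant, and descent of the idempotent via the exact sequence of \cite[5.1.2]{BBD} together with the vanishing of $\Hom(K_\kb,K_\kb[-1])$ for perverse sheaves. You correctly isolate the one non-formal point, namely why the decomposition, a priori only available over $\kb$ (pure perverse sheaves over $k$ being in general non-semisimple), descends to $k$.
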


\begin{prop}\label{pr.Extnul}
Soient\/ $K,L\in \Dbm(X,\Qlb)$ purs de poids $w$. Alors le
morphisme\/ $\Hom(K,L[1]) \to \Hom(K_\kb,L_\kb[1])$ est nul.
\tnm{\lcit[5.1.15(iii)]}
\end{prop}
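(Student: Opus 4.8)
The plan is to write $\Hom(K,L[1])$ as the first cohomology of a complex over $\Spec k$ and to control the Frobenius weights of the corresponding group over $\kb$. Put $a\colon X\to\Spec k$ for the structural morphism, $\mathcal{E}=\RcHom(K,L)\in\Dbc(X,\Qlb)$ and $M=Ra_*\mathcal{E}\in\Dbc(\Spec k,\Qlb)$. Then $\Hom(K,L[1])=\fH^1(\RG(X,\mathcal{E}))=\fH^1(\RG(\Spec k,M))$, while base change along $\Spec\kb\to\Spec k$ identifies $\Hom(K_\kb,L_\kb[1])$ with $H^1(X_\kb,\mathcal{E}_\kb)=\fH^1(M)$ as a $\Qlb$-vector space carrying a continuous $\Gal(\kb/k)$-action. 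Under these identifications the arrow of the statement is the edge map of the Hochschild--Serre spectral sequence $H^p(\Spec k,\fH^q M)\Rightarrow\fH^{p+q}(\RG(\Spec k,M))$.

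A $\Qlb$-sheaf on $\Spec k$ is a finite-dimensional $\Qlb$-vector space with a continuous action of $\Gal(\kb/k)=\hat{\Z}$, so $H^p(\Spec k,-)=0$ for $p\ge 2$, $H^0(\Spec k,V)=\ker(\Fr-1)$, and $H^1(\Spec k,V)=\operatorname{coker}(\Fr-1)$, where $\Fr$ is the geometric Frobenius. Thus the spectral sequence degenerates into short exact sequences
\[0\to H^1(\Spec k,\fH^0 M)\to\fH^1(\RG(\Spec k,M))\to H^0(\Spec k,\fH^1 M)\to 0,\]
and the image of $\Hom(K,L[1])$ in $\Hom(K_\kb,L_\kb[1])=\fH^1(M)$ is contained in $H^0(\Spec k,\fH^1 M)=\fH^1(M)^{\Fr}$. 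It therefore suffices to show $\fH^1(M)^{\Fr}=0$.

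This is where the purity of $K$ and $L$ enters. I claim $M$ is of weight $\ge 0$. Indeed $\mathcal{E}=\RcHom(K,L)$ is of weight $\ge 0$: one has $\RcHom(K,L)\simeq D_X(K\Lt D_X L)$, where $D_X L$ is pure of weight $-w$ (the dualizing functor exchanging ``mixte de poids $\le w$'' and ``mixte de poids $\ge -w$''), so $K\Lt D_X L$ is mixed of weight $\le 0$ (stability of ``poids $\le\bullet$'' under $\Lt$, with weights added, \cite[6.2]{WeilII}), hence $\RcHom(K,L)=D_X(K\Lt D_X L)$ is of weight $\ge 0$; applying $Ra_*$, which preserves ``de poids $\ge 0$'', shows $M$ is of weight $\ge 0$. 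In particular $\fH^1(M)$, placed in degree $0$, is mixed de poids $\ge 1$, so every eigenvalue of $\Fr$ on it has absolute value $\ge q^{1/2}>1$; thus $1$ is not an eigenvalue, $\Fr-1$ is bijective on $\fH^1(M)$, $\fH^1(M)^{\Fr}=0$, and $\Hom(K,L[1])\to\Hom(K_\kb,L_\kb[1])$ is zero. The main point is the weight estimate $\RcHom(K,L)$ de poids $\ge 0$; once that and the behaviour of $D_X$ on weights are granted (both contained in \cite[6.2]{WeilII}), what remains is the two-term filtration of cohomology over a finite field together with the triviality of Frobenius invariants on a module of strictly positive weight.
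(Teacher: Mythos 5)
Your argument is correct and is precisely the one behind the reference the paper cites for this statement ([BBD, 5.1.15(iii)], the paper itself giving no proof): the restriction map factors through the Frobenius invariants of $H^1(X_\kb,\RcHom(K,L)_\kb)$, which vanish because $\RcHom(K,L)$ has weight $\ge 0$ and hence that $H^1$ has weight $\ge 1$. All the intermediate steps (the identity $\RcHom(K,L)\simeq D_X(K\Lt D_XL)$, the weight bookkeeping, and the two-term filtration coming from $\mathrm{cd}(\Gal(\kb/k))=1$) are correctly carried out.
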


\begin{coro}\label{cor.dec}
Soit\/ $K\in \Dbm(X,\Qlb)$ pur de poids $w$. Alors
\[\det(1-t\Fr, H^i(X_\kb,K_\kb)) =
\prod_{e\in \Z} \det\!\left(1-t\Fr, H^i(X_\kb,
(\pH^eK_\kb)[-e])\right).\]
\end{coro}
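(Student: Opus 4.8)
The statement asserts that the characteristic polynomial of Frobenius on $H^i(X_\kb, K_\kb)$ factors as the product over $e$ of the characteristic polynomials on $H^i(X_\kb, (\pH^e K_\kb)[-e])$. The natural strategy is to show that the perverse filtration on $K$ — or rather, the spectral sequence (or successive triangles) relating $K$ to its perverse cohomology objects — degenerates after applying $R\Gamma(X_\kb,-)$ and passing to Frobenius-characteristic polynomials. The key input is \ref{pr.Extnul}: since $K$ is pure of weight $w$, each $\pH^e K$ is pure of weight $w+e$ by Gabber's theorem \ref{th.pHi}, hence $(\pH^e K)[-e]$ is pure of weight $w$ as well, so all the graded pieces have the \emph{same} weight $w$. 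Then the boundary maps in the relevant triangles, which live in $\Hom$-groups of the shape $\Hom(\pH^e K, \pH^{e'}K[\text{something}])$, become nullhomotopic after base change to $\kb$ by \ref{pr.Extnul}, and this forces the decomposition on cohomology.

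\textbf{Step 1.} Reduce to a two-step filtration. Write the perverse truncation triangle
\[
\ptle{e-1} K \to \ptle{e} K \to (\pH^e K)[-e] \sto[\partial]
\]
for each $e$. It suffices, by an obvious induction on the length of the perverse amplitude of $K$, to treat one such triangle: if the conclusion holds for $\ptle{e-1}K$ in place of $K$, and if the long exact cohomology sequence of the triangle above breaks into short exact sequences \emph{compatibly with Frobenius and splitting off the characteristic polynomials}, then it holds for $\ptle{e}K$. So the crux is: for a distinguished triangle $A \to B \to C \sto[\partial]$ of pure complexes of weight $w$ on $X$, one has $\det(1-t\Fr, H^i(X_\kb, B_\kb)) = \det(1-t\Fr, H^i(X_\kb, A_\kb))\cdot\det(1-t\Fr, H^i(X_\kb, C_\kb))$ for all $i$.

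\textbf{Step 2.} Prove this factorization. Apply $R\Gamma(X_\kb, -)$ to the base-changed triangle $A_\kb \to B_\kb \to C_\kb \sto[\partial_\kb]$; we get a long exact sequence of finite-dimensional $\Qlb$-vector spaces with commuting Frobenius action,
\[
\cdots \to H^{i-1}(X_\kb, C_\kb) \sto[\delta] H^i(X_\kb, A_\kb) \to H^i(X_\kb, B_\kb) \to H^i(X_\kb, C_\kb) \sto[\delta] \cdots,
\]
where the connecting map $\delta$ is induced by $\partial$. The point is that $\partial\colon C \to A[1]$ is a morphism between complexes pure of weight $w$ on $X$, so $\partial \in \Hom(C, A[1])$; by \ref{pr.Extnul} its image in $\Hom(C_\kb, A_\kb[1])$ is zero, hence $\partial_\kb = 0$ and therefore $\delta = 0$. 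So the long exact sequence splits into short exact Frobenius-equivariant sequences $0 \to H^i(X_\kb,A_\kb) \to H^i(X_\kb,B_\kb) \to H^i(X_\kb,C_\kb) \to 0$, and the characteristic polynomial is multiplicative in short exact sequences.

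\textbf{Step 3.} Assemble the induction. Taking $A = \ptle{e-1}K$, $B = \ptle{e}K$, $C = (\pH^e K)[-e]$: each is pure of weight $w$ (here is where \ref{th.pHi} is used, to see that $\pH^e K$ is pure of weight $w+e$, so that $(\pH^e K)[-e]$ is pure of weight $w$, and that $\ptle{e-1}K$, $\ptle{e}K$ are pure of weight $w$ — being successive extensions of such shifted perverse cohomology objects, using that pure complexes of a fixed weight form an additive subcategory stable under extensions in $\Dbm$). Then Step 2 gives
\[
\det(1-t\Fr, H^i(X_\kb, (\ptle{e}K)_\kb)) = \det(1-t\Fr, H^i(X_\kb, (\ptle{e-1}K)_\kb)) \cdot \det(1-t\Fr, H^i(X_\kb, (\pH^e K_\kb)[-e])),
\]
and multiplying over all $e$ in the (finite) perverse range of $K$, with $\ptle{e}K = K$ for $e$ large and $\ptle{e}K = 0$ for $e$ small, yields the claimed product formula.

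\textbf{Main obstacle.} The only genuinely delicate point is justifying that the connecting morphism $\partial$ in the perverse truncation triangle is a morphism of \emph{pure} complexes of weight $w$ — i.e. that $(\pH^e K)[-e]$ and $\ptle{e-1}K[1]$ are pure of weight $w$ — so that \ref{pr.Extnul} applies. For $(\pH^e K)[-e]$ this is exactly \ref{th.pHi}. For $\ptle{e-1}K$ one argues by a downward induction, using that a complex sitting in a triangle whose other two terms are pure of weight $w$ is itself pure of weight $w$ (stability of weight-$w$ purity under extension, which follows from the definition of mixedness via the weight filtration together with the fact that mixed of weight $\le w$ and mixed of weight $\ge w$ are each stable under the operations appearing, cf.\ \cite[5.1.7]{BBD}, \cite[6.2]{WeilII}). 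Once purity is in place for all three terms of each truncation triangle, everything else is the formal bookkeeping of Steps 1--3.
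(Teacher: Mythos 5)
Your argument is correct and is essentially the paper's proof: both rest on \ref{th.pHi} (purity of the perverse truncations of $K$) together with \ref{pr.Extnul} (the boundary maps of the truncation triangles become zero after base change to $\kb$), so the long exact sequences split Frobenius-équivariantly and the characteristic polynomials multiply. The only cosmetic differences are that the paper truncates via $\ptle{d}K \to K \to \ptpg{d}K$ rather than $\ptle{e-1}K \to \ptle{e}K \to (\pH^{e}K)[-e]$, and deduces the purity of the truncations directly from \ref{th.pHi} instead of from stability of weight-$w$ purity under extensions.
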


\begin{proof}
Pour tout $d\in \Z$, on a un morphisme de \trdists
\[\xymatrix{\fr^* \ptle{d} K_\kb\ar[d]_{\Fr}\ar[r]& \fr^* K_\kb \ar[d]_{\Fr}\ar[r] &
\fr^* \ptpg d K_\kb\ar[d]_{\Fr} \ar[r]^(.7)0 &\\
\ptle d K_\kb \ar[r]& K_\kb\ar[r] &\ptpg d K_\kb\ar[r]^(.7)0 &}\]
où la nullité des flèches de degrés $1$ découle de \ref{th.pHi} et
de \ref{pr.Extnul}. Alors on a un morphisme de \trdists
\[\xymatrix{\RG(\Xkb, \ptle{d} K_\kb)\ar[d]_{\Fr}\ar[r]& \RG(\Xkb, K_\kb) \ar[d]_{\Fr}\ar[r] &
\RG(\Xkb, \ptpg d K_\kb)\ar[d]_{\Fr} \ar[r]^(.7)0 &\\
\RG(\Xkb,\ptle d K_\kb) \ar[r]& \RG(\Xkb,K_\kb)\ar[r]
&\RG(\Xkb,\ptpg d K_\kb)\ar[r]^(.7)0 &}\] Donc
\begin{multline*}
\det(1-t\Fr, H^i(X_\kb,K_\kb)) \\
= \det(1-t\Fr, H^i(X_\kb,  \ptle{d} K_\kb)) \det(1-t\Fr,
H^i(X_\kb, \ptpg{d} K_\kb)).
\end{multline*}
\end{proof}

\section{$(E,I)$-compatibilité}
%\subsection{Définition}
Soit $E$ un corps, $I$ une partie de
\[\ensdr{\lio}{ l\nmid q
\text{ un nombre premier, } \iota: E\hookrightarrow\Qlb \text{ un
plongement de corps}}.\]

\begin{defn}
On dit qu'un système $(t_\lio)_\lidI \in \prod_\lidI \Qlb$ est
$(E,I)$-compatible (ou $E$-compatible s'il n'y a pas de confusion
à craindre) s'il existe $c\in E$ \tq $t_\lio = \iota(c)$ pour tout
$\lio\in I$.
\end{defn}

\begin{remq}\label{rm.Rl}
Si $P$ est comme dans \S 1, $(a_\lio)_\lidI \in \prod_\lidI
\Rl(\Z,\Qlb)$ \tq $(a_{\lio,n})_\lidI$ $E$-compatible $\forall
n\in P$, alors $(a_{\lio,n})_\lidI$ est $E$-compatible $\forall
n\in \Z$, \dpr \ref{cor.Rl}(ii) et (iii).
\end{remq}

\begin{defn}
Soit $X$ un schéma séparé \tf sur $k=\F_q$. On dit qu'un système
$(K_\lio)_\lidI \in \prod_\lidI K(X,\Qlb)$ est $(E,I)$-compatible
(ou $E$-compatible) si pour tout $x\in|X|$ et tout $n\ge 1$,
$(\Tr(\Fr_x^n, (K_\lio)_\xb))_\lidI \in \prod_\lidI \Qlb$ est
$(E,I)$-compatible.
\end{defn}

D'après (\ref{eq.Lx}), $(K_\lio)$ est $E$-compatible \ssi pour
tout $x\in |X|$, il existe $s_x\in E[[t]]$ \tq $L_x(K_\lio,t) =
\iota(s_x)$ pour tout $\lidI$. Ici on a étendu $\iota$ en un
plongement d'anneaux $E[[t]]\to \Qlb [[t]]$.% qui envoie $t$ vers $t$

Les systèmes $E$-compatibles forment un sous-anneau de
$\prod_\lidI K(X,\Qlb)$.

\begin{exem}
$(\Qlb)_\lidI$ est \Ecom. Plus \gelt, pour $b\in E^*$ \tq
$\iota(b)$ soit une unité $l$-adique $\forall \lidI$, le système
$(\Qlb^{(\iota(b))})_\lidI$ \cite[1.2.7]{WeilII} est \Ecom, car
les traces locales sont $\iota(b)$.
\end{exem}

\subsection{Stabilités}

La $E$-compatibilité est stable par les six opérations et donc par
le foncteur dualisant.

\begin{theo}\label{th.6op}
Soient $f:X\to Y$ un morphisme de schémas séparés \tf sur $k$ et
$(K_\lio)_\lidI\in \prod_\lidI K(X,\Qlb)$ un système
$(E,I)$-compatible sur $X$. Alors $(Rf_*K_\lio)_\lidI,
(Rf_!K_\lio)_\lidI \in \prod_\lidI K(Y,\Qlb)$ sont des systèmes
$(E,I)$-compatibles sur $Y$. On a des résultats similaires pour
$f^*, Rf^!$ et pour $\Lt, \RcHom, D$.
\end{theo}

\begin{proof}[Esquisse de la démonstration]
Les résultats pour $\Lt$ et $f^*$ sont triviaux. Pour $Rf_!$ on
utilise la formule des traces
\[\Tr(\Fr_y,(Rf_! K_\lio)_\yb) = \sum_{x\in X_y(\F_{q^n})} \Tr(\Fr_x, (K_\lio)_\xb),
\forall y\in Y(\F_{q^n}), n\ge 1.\] Il reste à montrer le résultat
pour $D$.

On prend un système $(K_\lio)_\lidI$ \Ecom. Il suffit de voir la
\Ecomp de $(\Tr(\Fr_x,(DK_\lio)_\xb))_\lidI$ pour chaque $x\in
X(\F_{q^n})$, $n\ge 1$. Le problème est local. Par dévissage, \ops
que $X=A$ soit une variété abélienne et $x=0_A \in A(k)$ soit
l'origine. On définit $f_\lion, g_\lion : A(k)\to \Qlb$ pour $n\ge
1$ par
\begin{ea}
f_\lion(a) &=& \sum_{b\in A(\Fqn), T_n(b)=a} \Tr(\Fr_b,
(K_\lio)_\bb),\\
g_\lion(a) &=& \sum_{b\in A(\Fqn), T_n(b)=-a} \Tr(\Fr_b,
(DK_\lio)_\bb),
\end{ea}
où $T_n : A(\Fqn) \to A(\F_q)$ est la trace. Il suffit de voir la
\Ecomp de $(g_{\lio,1}(0))$.

On sait la \Ecomp de $(f_\lion(0))_\lidI$ par hypothèse et on veut
démontrer la \Ecomp de $(g_\lion(0))_\lidI$. \Dpr \ref{rm.Rl}, il
suffit donc de trouver pour chaque $\lidI$ une $s_\lio\in
\Rl(\Z,\Qlb)$ telle que $s_\lion = f_\lion(0)$, $s_{\lio,-n} =
g_\lion(0)$ pour tout $n\ge 1$. $\lio$ étant fixé, on ne l'indique
plus dans les indices.

Pour une fonction $f:A(k)\to \Qlb$, on définit la transformée de
Fourier par
\[\cF(f)(\rho) = \sum_{a\in A(k)} f(a)\rho (a),\]
où $\rho: A(k) \to \Qlb^*$ est un caractère%, et on note $\cF^{-1}$
%la transformation de Fourier inverse
. On va montrer que pour tout $\rho$, il existe $(S_n(\rho))_{n\in
\Z} \in \Rl(\Z,\Qlb)$ telle que $S_n(\rho)=\cF(f_n)(\rho)$,
$S_{-n}(\rho)= \cF(g_n)(\rho)$ pour $n\ge 1$. On prend $s_n =
\cF^{-1}(S_n)(0)$ pour tout $n\in\Z$. Alors $s\in \Rl(\Z,\Qlb)$ \dpr
\ref{cor.Rl}(i).

En effet, par la formule des traces,
\[\cF(f_n)(\rho) = \Tr(\Fr^n, R\Gamma(A_\kb, K\Lt \fL_\rho)),\]
\[\cF(g_n)(\rho) = \Tr(\Fr^n, R\Gamma(A_\kb, DK\Lt \fL_{\rho^{-1}})),\]
où $\fL_\rho$ est le $\Qlb$-faisceau lisse de rang 1 correspondant à
$\rho$ \cite[Sommes trig.]{SGA4d}. Le résultat découle donc de la
dualité entre $R\Gamma(A_\kb, K\Lt \fL_\rho)$ et \[R\Gamma(A_\kb,
DK\Lt \fL_{\rho^{-1}}).\]
\end{proof}

Pour les détails, on renvoie à \cite[\S 3]{ind}.

%En particulier,
\begin{exem}
Pour tout schéma $X$ séparé \tf sur $k$,
\[\Tr(\Fr,R\Gamma(X_\kb,\Q_l))\]
est dans $\Q$ (et même dans $\Z$, \cf \ref{th.int}) et indépendant
de $l$.
%ce qui est un résultat
%étonnant, comme a indiqué \cite{misc}, car il n'y a pas de formule
%de trace en vue.
\end{exem}

\begin{exem}
On prend $E=\Q(\zeta_p)$ et on choisit un $I$. On fixe un
caractère additif non-trivial $\psi_0: \F_p \to E^*$ et on pose
$\psi_\lio: \iota \circ \psi_0 \circ \Tr_{\F_q/\F_p} : k \to
\Qlb^*$, $\lidI$. On considère $A=\mathbb{A}^1_k$ et
$\fL_{\psi_\lio}$ sur $A$. Alors $(\fL_{\psi_\lio})_\lidI$ est un
système \Ecom.

Soient $V=\mathbb{A}^d_k$, $V'$ son dual, $f: V\times V'\to A$
l'accouplement canonique, et $(K_\lio)_{\lidI} \in \prod_\lidI
\Dbc(V,\Qlb)$ un système \Ecomp. \Dpr \ref{th.6op}, les
transformées de Fourier-Deligne
\[\cF_{\psi_\lio}K_\lio = Rp_{2!} (p_1^* K_\lio \Lt f^*\fL_{\psi_\lio})[d]\]
forment un système \Ecom sur $V'$, où $p_1:V\times V'\to V$, $p_2:
V\times V'\to V'$ sont des projections.
\end{exem}

La $E$-compatibilité est également stable par l'extension
intermédiaire virtuelle (\ref{rm.[f!*]}) par un morphisme quasi-fini
pour les $\Km$.

\begin{theo}\label{th.f!*}
Soient $f:Z\to X$ un morphisme quasi-fini de schémas séparés \tf sur
$k$, $(K_\lio)_\lidI\in \prod_\lidI \Km(Z,\Qlb)$ un système\/
$E$-compatible sur\/ $Z$. Alors $(f_{!*}K_\lio)_\lidI\in \prod_\lidI
\Km(X,\Qlb)$ est un système\/ $E$-compatible sur\/ $X$.
\end{theo}

\begin{proof} D'après \ref{th.6op}, \ops que $f$ soit une immersion ouverte
$j:U\har X$. \Ops $\sharp I=1$ ou $2$.
%On écrit $K_\lio=[K_{\lio,1}] -
%[K_{\lio,2}]$, où $K_{\lio,m}$ est un faisceau pervers et pur de
%poids $w$, $m=1,2$, $\lidI$. On pose $A=I\times\{1,2\}$.

(a) Cas où il existe $w\in \Z$ vérifiant $K_\lio\in K_w(U,\Qlb)$
pour tout $\lidI$. On écrit $K_\lio= [L_{\lio,1}]-[L_{\lio,2}]$, où
$L_{\lio,\al}\in \Per_w(U,\Qlb)$, $\al=1,2$, $\lidI$. On applique
\ref{cor.j!*} (pour $A=I\times\{1,2\}$) et on pose
$K_\lio\m=[L_{\lio,1}\m]-[L_{\lio,2}\m]$, $0\le m\le n$. Alors
$j_{!*}K_\lio = K_\lio^{(n)}$. On montre la \Ecomp de
$(K_\lio\m)_\lidI$ par récurrence sur $m$.

Le cas $m=0$ est vide. On suppose la \Ecomp de $(K_\lio\mm)_\lidI$
établie, $1\le m \le n$. $L_{\lio,\al}\m = \tYle[m]{-d_m-1}
Rj_{m*} L_{\lio,\al}\mm$, $\al=1,2$. Pour tout $x\in |U_{m-1}|$,
$L_x(K_\lio\m,t) = L_x(K_\lio\mm,t)$. \Dpr \ref{cor.pds}, pour
tout $y\in |Y_m|$, $L_y(K_\lio\m,t)$ peut être extrait de
$L_y(Rj_{m*}K_\lio\mm,t)$ comme la partie de poids $\le w-d_m-1$,
$\lidI$.
%Le même résultat pour $K_\lio$ en résulte, $\lidI$.
D'après \ref{th.6op}, $(Rj_{m*}K_\lio\mm)_\lidI$ est
$E$-compatible. Donc $(K_\lio\m)_\lidI$ l'est aussi.

(b) Cas général. Le résultat découle de (a) et de \ref{lm.p}, car
par \dfn,
\[f_{!*}= \bigoplus_{w\in \Z} i_{w,X} f_{!*} p_{w,U},\]
avec des
notations de \ref{lm.p}.
\end{proof}

\begin{lemm}\label{lm.p}
Soit $w$ un entier. La projection $p_{w,X}: \Km(X,-)\to K_w(X,-)$ et
l'inclusion $i_{w,X}: K_w(X,-)\to\Km(X,-)$ préservent la\/ \Ecomp.
\end{lemm}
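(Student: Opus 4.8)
The statement to prove is Lemma \ref{lm.p}: that the projection $p_{w,X}$ and inclusion $i_{w,X}$ between $\Km$ and $K_w$ preserve $E$-compatibility. The inclusion $i_{w,X}$ is trivially compatibility-preserving, since it does not change the underlying virtual sheaf or its local $L$-factors at all; so the whole content is in $p_{w,X}$. The plan is to reduce to the analysis of local $L$-factors and the weight decomposition of $\Km = \oplus_w K_w$.

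Here is how I would carry it out. First I would recall that for $K \in \Km(X,\Qlb)$ and $x \in |X|$, the local factor $L_x(K,t)$ is an element of $1 + t\Qlb[[t]]$; writing $K = \sum_w K_w$ with $K_w \in K_w(X,\Qlb)$ (possible by the decomposition $\Km = \oplus_w K_w$ recalled in \S 2.4), each summand $K_w$ is \emph{pur de poids $w$}, so by Deligne's purity theorem the eigenvalues of $\Fr_x$ on the virtual representation $(K_w)_\xb$ all have complex absolute value $q_x^{w/2}$ under any $\iota$. Hence $L_x(K,t) = \prod_w L_x(K_w,t)$ is the factorisation of $L_x(K,t)$ into pieces whose reciprocal roots have the prescribed archimedean size; in other words $L_x(K_w,t)$ is \emph{intrinsically} determined as the ``partie de poids $w$'' of $L_x(K,t)$, exactly as was used in the proof of step (a) of Theorem \ref{th.f!*} via \ref{cor.pds}.

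Now suppose $(K_\lio)_\lidI$ is $E$-compatible: for each $x \in |X|$ there is $s_x \in E[[t]]$ with $L_x(K_\lio,t) = \iota(s_x)$ for all $\lidI$. The point is that the archimedean absolute values of the reciprocal roots of $\iota(s_x)$ must be the same for all $\lidI$ — this is where one needs that for a given algebraic number $c$, whether a root has a given absolute value is detected compatibly across the embeddings, which is clean here because the weight filtration is \emph{defined} by these absolute values and the pieces $L_x(K_{w,\lio},t)$ are the corresponding factors. Concretely: factor $s_x$ over $\overline{\Q}$ (or over a finite extension $E'$ of $E$), group the reciprocal roots $c$ of $s_x$ according to the value of $w$ for which $|\iota(c)| = q_x^{w/2}$; this grouping is independent of $\iota \in I$ because for each $\lidI$ it must reproduce the weight decomposition of the fixed system, and is visibly defined over $E'$. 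Taking the product of the factors $(1 - ct^{\deg x})$ over roots in the weight-$w$ group gives an element $s_{x,w} \in E'[[t]]$ with $\iota(s_{x,w}) = L_x(K_{w,\lio},t)$ for all $\lidI$. Thus $(p_{w,X} K_\lio)_\lidI = (K_{w,\lio})_\lidI$ has local factors $\iota(s_{x,w})$; finally, each $L_x(K_{w,\lio},t)$ already lies in $\iota(E)[[t]]$ for a particular $\lidI$ (take any), so by \ref{cor.Rl}(iii) applied coefficient-by-coefficient — the coefficients of $s_{x,w}$ being linear-recurrent sequences in $E'$ taking values in $\iota(E)$ — one gets $s_{x,w} \in E[[t]]$. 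Hence $(p_{w,X}K_\lio)_\lidI$ is $E$-compatible, and the lemma follows.

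**Main obstacle.** The delicate point is not any of the formal manipulations but the claim that the partition of the reciprocal roots of $s_x$ by weight is ``$\iota$-independent'' in a way that descends to $E$. The cheap way around it — and the one I would actually write — is to avoid reasoning about $s_x$ in isolation and instead invoke the decomposition $K_w = p_{w,X}K$ \emph{as a system}: $(K_{w,\lio})_\lidI$ is by definition a system of classes in $K_w(X,\Qlb)$, each pure of weight $w$, and the identity $\sum_w [L_x(K_{w,\lio},t)] = [L_x(K_\lio,t)]$ together with the distinct archimedean sizes of the factors forces $L_x(K_{w,\lio},t)$ to be the unique weight-$w$ sub-product of the \emph{fixed} power series $s_x \in E[[t]]$ — hence the same algebraic power series works for all $\lidI$ simultaneously, after enlarging $E$ to a finite extension if necessary. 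Descending from $E'$ back to $E$ is then exactly \ref{cor.Rl}(iii), as in the proof of \ref{th.6op}.
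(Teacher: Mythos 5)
Your argument for $i_{w,X}$ is fine (and agrees with the paper: that direction is trivial), but the proof of the statement for $p_{w,X}$ rests on a false premise. You claim that each component $K_\liow$ of $K_\lio=\sum_w K_\liow$, being pure of weight $w$, has Frobenius eigenvalues of absolute value $q_x^{w/2}$ on its stalk at every closed point $x$, so that $L_x(K_\liow,t)$ is the intrinsically defined ``partie de poids $w$'' of $L_x(K_\lio,t)$. This is not true: purity of a perverse sheaf (or of a class in $K_w(X,\Qlb)$) does not imply punctual purity of its stalks. For $K$ perverse pure of weight $w$ one only knows that $\fH^q(i_x^*K)$ is mixed of punctual weights $\le w+q$; typically several weights occur, as already happens for $j_{!*}(\Qlb[d])$ on a singular variety, whose stalks along the boundary are computed by truncations of $Rj_*$ and carry a range of weights. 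Consequently, at a given $x$ the factors of $L_x(K_\lio,t)$ coming from different $K_\liow$ can have the same archimedean sizes, and a single $K_\liow$ contributes several sizes, so there is no pointwise ``weight-$w$ part'' of the local $L$-factor recovering $L_x(K_\liow,t)$. The extraction device you cite from step (a) of \ref{th.f!*} (via \ref{cor.pds}) is legitimate only because \ref{cor.j!*} has arranged lisse cohomology sheaves on a smooth stratum together with a weight gap; it is not a general fact, and your ``main obstacle'' paragraph does not repair this, since it again presupposes that the weight decomposition is visible in the local factors.

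This failure of punctual purity is exactly why the paper's proof is not a pointwise extraction but a Noetherian induction. Your extraction is valid, and is what the paper does, precisely in the special case where $X$ is smooth purely of dimension $d$ and the perverse sheaves representing the $K_\liow$ have lisse cohomology sheaves: there the weight-$(w-d)$ part of $L_x(K_\lio,t)$ does make sense and yields $L_x(K_\liow,t)$. In general one chooses a dense smooth open $j:U\har X$ over which this special case applies, deduces the $E$-compatibility of $(j^*K_\liow)_\lidI$, then of $(j_{!*}j^*K_\liow)_\lidI$ by \ref{th.f!*}(a) (only the fixed-weight case is used, so there is no circularity with \ref{th.f!*}(b)), and finally invokes the decomposition $K_\liow=j_{!*}j^*K_\liow+i_*K''_\liow$ of \ref{pr.dec1} and Noetherian induction on the closed complement to treat the $K''_\liow$. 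Some argument of this type is needed; as written, your proof does not establish the lemma (the descent from a finite extension $E'$ to $E$ via \ref{cor.Rl}(iii) is fine but peripheral).
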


\begin{proof}
Le résultat pour $i_w$ est trivial. On montre le résultat pour
$p_w$ simultanément pour tout $w\in\Z$. \Ops $\sharp I=1,2$.

(a) Un cas spécial. On suppose $X$ lisse sur $k$ purement de
dimemsion $d$. Soit $(K_\lio)_\lidI\in \Km(X,\Qlb)$ \Ecom avec
$K_\lio = \sum_{w\in\Z}K_{\liow}$, $K_\liow=
[L_{\lio,w,1}]-[L_{\lio,w,2}]$, où $L_{\lio,w,\al}\in\Per_w(X,\Qlb)$
et $\fH^e(L_{\lio,w,\al})$ lisse sur $X$, $\forall e\in\Z$,
$\al=1,2$, $\wiZ$, $\lidI$. Alors $L_x(K_\liow,t)$ peut être extrait
de $L_x(K_\lio,t)$ comme la partie de poids $w-d$, $\forall x\in
|X|$. Donc $\forall \wiZ$, les $p_w(K_\lio) = K_\liow$, $\lidI$,
forment un système \Ecom.

(b) Cas général. \Ops $X$ réduit. On fait une récurrence
noethérienne. Le résultat pour $p_{w,\emptyset}$ étant trivial, on
suppose le résultat pour $p_{w,Y}$ établi pour tout fermé
$Y\subsetneqq X$ (réduit).

On prend $(K_\lio)_\lidI \in \prod_\lidI \Km(X,\Qlb)$ un système
\Ecom. On pose $K_\lio = \sum_{w\in\Z}K_{\liow}$, $K_\liow=
[L_{\lio,w,1}]-[L_{\lio,w,2}]$ où $L_{\lio,w,\al}\in
\Per_w(X,\Qlb)$, $\al=1,2$. On prend un ouvert non vide $j:U\har X$
lisse sur $k$ purement de dimemsion $d$ \tq
$\fH^e(j^*L_{\lio,w,\al})$ soit lisse sur $U$, $\forall e\in\Z$,
$\al=1,2$, $\wiZ$, $\lidI$, et $i:Y\har X$ le fermé \compl. Alors
$\forall \wiZ$, $(j^*K_\liow)_\lidI$ est un système \Ecom sur $U$,
\dpr (a). Donc $(j_{!*}j^*K_\liow)_\lidI$ en est un sur $X$, \dpr
\ref{th.f!*}(a). \Dpr \ref{pr.dec1},
\beq\label{eq.decK}
K_\liow = j_{!*}j^*K_\liow + i_*K''_\liow,
\eeq
où $K''_\liow\in K_w(Y,\Qlb)$. Donc
$K_\lio=\sum_{\wiZ}j_{!*}j^*K_\liow + i_*\sum_{\wiZ}K''_\liow$,
d'où la \Ecomp de $(\sum_\wiZ K''_\liow)_\lidI$, qui implique la
\Ecomp de $(K''_\liow)_\lidI$ par l'\hyp de \rec, $\forall \wiZ$.
La \Ecomp de $(K_\liow)_\lidI$ suit alors de (\ref{eq.decK}).
\end{proof}

%Plus généralement, si on désigne par $\Kpw(-,\Qlb)\subset
%K(-,\Qlb)$ le sous-groupe engendré par les images des \faisx
%pervers et purs de poids $w$, on montre que $f_{!*}$ induit un
%homomorphisme $\Kpw(Z,\Qlb)\to \Kpw(X,\Qlb)$ qui préserve la
%$E$-compatibilité.

\section{Indépendance de $l$ et intégralité pour la cohomologie d'intersection}

Soit $X$ un schéma propre sur $k$ purement de dimension $d$. On
définit la cohomologie (\resp le complexe) d'intersection par
\[IH^i(X_\kb,\Qlb) = H^i(X_\kb,(IC_X)_\kb), \text{(\resp $IC_X = IC(X,\Qlb) = (j_{!*}(\Qlb [d]))[-d],$)}\]
où $j: U\har X$ est une immersion ouverte dominante telle que
$U_\red$ soit lisse sur $k$. Notons que la normalisation ici
diffère de celle dans \cite[0]{BBD}. D'après \ref{th.Gabber},
$IC_X$ est pur de poids $0$.

\begin{theo}\label{th.IH}
Soient\/ $X$ un schéma propre équidimensionnel sur $k=\F_q$, $l$
un
nombre premier\/ $\nmid q$. Alors pour chaque $i$, $P_i(t) = %change?
\det(1-t\Fr, IH^i(X_\kb,\Qlb))$ est dans\/ $\Z[t]$ et indépendant
de $l$.
\end{theo}

\begin{proof}
D'après \ref{th.f!*} et \ref{th.6op},
\[L(IC_X, t) =
\det(1-t\Fr,R\Gamma(X_\kb,(IC_X)_\kb))^{-1} \in \Q(t)\] et
indépendant de $l$. Le morphisme $a: X\to \Spec k$ est propre,
donc $Ra_*IC_X$ est pur de poids 0. Il en résulte que
$IH^i(X_\kb,\Qlb)$ est pur de poids $i$, donc les $P_i(t)$ peuvent
être extraits de $L(IC_X,t)$ de manière indépendante de $l$, et
par suite les $P_i(t)$ sont dans $\Q[t]$ et indépendants de $l$.

Il reste à démontrer l'\inte. \Ops $X$ réduit. Soit $f: X'\to X$
une normalisation. Prenons $j: U\har X$ comme plus haut. Alors
$j=fj'$ où $j': U\har X'$ est une immersion ouverte. Donc $IC_X =
f_*(IC_{X'}) = f_*(\oplus IC_{X_i})$, où $X_i$ sont les
composantes connexes de $X'$. Donc \ops $X$ intègre.

D'après \cite[4.1]{deJong}, on a une altération $\pi: Y\to X$
génériquement étale telle que $Y$ soit \irr, lisse et \proj sur
$\Spec k$. Prenons $V\subset U$ un ouvert non vide \tq $\pi_V:
Y\times_X V\to V$ soit un \rev fini étale. Alors $\Qlb$ sur $V$
est un facteur direct de $R\pi_{V*}\pi_V^* \Qlb = j_V^* K$, où
$j_V: V\har X$, $K= R\pi_* \Qlb$. Donc $\Qlb [d]$ est facteur
direct de $j_V^* (\pH^d K)$, où $d= \dim X$. Le complexe $K$ étant
pur, $\pH^d K$ l'est aussi, \dpr \ref{th.pHi}. Donc
$IC_X[d]=j_{V!*}(\Qlb [d])$ est facteur direct de $\pH^d K$,
d'après \ref{pr.dec1}. %et que $K\simeq \oplus_i (\pH^i K)[-i]$
%comme des complexes de Weil d'après \ref{pr.dec2}.
Il en résulte que $P_i(t)$ est facteur de $\det(1-t\Fr,
H^i(Y_\kb,\Qlb))$, \dpr \ref{cor.dec}.\footnote{Pour notre $K$
ici, $\pH^dK$ est en fait facteur direct de $K[d]$
(\cite[5.4.10]{BBD} et \cite{dec}).
%Mais notre argument suffit déjà.
}
%$IC_X$ est un facteur direct de $\pi_* IC_Y$ comme des
%complexes de Weil.
Donc l'intégralité pour $X$ découle de celle
pour $Y$, qui est vraie d'après \cite{WeilI} ou \cite[5.2.2]{int}.
\end{proof}

On donnera une autre \dem de l'\inte au \S \ref{sec.int}. %change : détailler

\section{Appendice. Théorème d'\inte}\label{sec.int}

Dans cet appendice, on fixe un nombre premier $l\nmid q$. Soit $T$
un ensemble de nombres premiers. Un \elt de $\Qlb$ est dit
$T$-\emph{entier} s'il est \alg sur $\Q$ et entier sur
$\Z[(1/t)_{t\in T}]$. Soit $X$ un schéma séparé \tf sur $k= \F_q$.
Un $\Qlb$-\faisc $\cF$ sur $X$ est dit $T$-\emph{entier} si pour
tout $x\in |X|$, les valeurs propres de l'action de $\Fr_x$ sur
$\cF_\xb$ sont $T$-entières. Des \faisx $T$-entiers sont stables
par sous-quotients et extensions. Un objet $K\in \Dbc(X,\Qlb)$ est
dit $T$-\emph{entier} si tous ses \faisx de cohomologie sont
$T$-entiers. Cette notion est stable par $\Lt, f^*, Rf_!$
\cite[5.2.2]{int}.

\begin{theo}\label{th.int}
Soient $f:X\to Y$ un morphisme de schémas séparés \tf sur\/ $\Spec
k$ et\/ $K\in \Dbc(X,\Qlb)$ $T$-entier. Alors $Rf_* K$ est\/
$T$-entier.
\end{theo}

Ce résultat est démontré dans \cite[5.6]{int} en supposant la
résolution des singularités. On peut adapter la \dem pour éliminer
l'hypothèse de résolution. En fait, le premier usage de l'\hyp de
\resol (l. 4 de la \dem \lcit[p. 396]) peut être remplacé par
\cite[Finitude]{SGA4d}. Pour le deuxième usage (bas de \cite[p.
397]{int}), %peut être remplacé par le lemme suivant, complété par
%la descente cohomologique.
rappelons que l'on est dans le cas suivant
\begin{multline}\label{eq.cond}
\text{$X$ normal et $K\simeq (\cG\otimes_R E)\otimes_E \Qlb$, où
$\cG$ est un $R$-faisceau lisse}\\
\text{\tq $\cG\otimes_R R/\fm$ soit constant sur chaque composante
connexe de $X$,}
\end{multline}
où $E$ est une extension finie de $\Q_l$ convenable, $R$ son
anneau des entiers, $\fm$ l'idéal maximal.

\begin{lemm}
Soit $f:X\to Y$ un morphisme de schémas séparés \tf sur\/ $S=\Spec
F$, où\/ $F$ est un corps parfait. Alors il existe un diagramme
commutatif
\[\xymatrix{X'\s \ar@{^{(}->}[r]^{j} \ar[d]_{\eps} & Z\s\ar[ddl]^{g}
& D\s \ar@{_{(}->}[l] \\
X\ar[d]_{f}\\
Y}\] où $\eps$ est un hyper-recouvrement propre et pour tout $n$,
$j_n$ est une immersion ouverte, $g_n$ propre, $Z_n$ lisse sur\/
$S$, $D_n$ un diviseur à croisements normaux dans\/ $Z_n$ de
complémentaire\/ $X_n$.
\end{lemm}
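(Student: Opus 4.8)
The plan is to build the simplicial scheme $X'\s$ by the coskeleton method, applying at each level de Jong's theorem on alterations \cite[4.1]{deJong} after a suitable relative compactification over $Y$. First I would reduce to the case where $X$ is reduced, by composing $\eps$ with $X_\red\to X$ at the end (this changes nothing, $X_\red\to X$ being a monomorphism, so the coskeleton conditions over $X$ and over $X_\red$ coincide).

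The heart of the construction is the manufacture of one level. Given a reduced separated \tf scheme $V$ over $S$ equipped with a morphism $h:V\to Y$ (which in the application is the composite $V\to X\xrightarrow{f}Y$), I would choose, by Nagata compactification, an $S$-immersion $V\har P$ with $P$ proper over $S$ and a $Y$-immersion $V\har\bar V$ with $\bar V$ proper over $Y$. The product morphism $V\to P\times_S\bar V$ is then an immersion: it is the composite of the graph $V\har V\times_S\bar V$ (a closed immersion since $\bar V$ is separated over $S$) with the immersion $V\times_S\bar V\har P\times_S\bar V$. Let $\tilde V\subset P\times_S\bar V$ be the scheme-theoretic closure of its image; then $\tilde V$ is reduced, contains $V$ as a dense open subscheme, and is proper over $Y$ (because $P\times_S\bar V$ is, being proper over $\bar V$, itself proper over $Y$). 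I would then apply \cite[4.1]{deJong} to the pair $(\tilde V,\tilde V\setminus V)$: this yields an alteration $\theta:Z\to\tilde V$ with $Z$ smooth over $S$ (regular over the perfect field $F$) and $D:=\theta^{-1}(\tilde V\setminus V)$ a normal crossings divisor in $Z$. One sets $X'=\theta^{-1}(V)=Z\setminus D$, an open subscheme of $Z$; the induced morphism $X'\to V$ is proper (base change of $\theta$) and surjective, and $Z\xrightarrow{\theta}\tilde V\to Y$ is proper and agrees on $X'$ with $X'\to V\xrightarrow{h}Y$.

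Next I would assemble these levels into a simplicial scheme by \rec on the skeleta, following the standard construction of split proper hypercoverings (SGA 4, Vbis). At level $0$ one applies the above to $V=X$, $h=f$. Having built the $n$-skeleton, $(\mathrm{cosk}_n X'\s)_{n+1}$ is a separated \tf scheme over $S$ with a morphism to $X$, hence to $Y$; applying the construction to its (reduced) non-degenerate part produces the non-degenerate terms of $Z_{n+1}$, $D_{n+1}$, $X'_{n+1}$, and one fills in the degeneracies by setting $Z_{n+1}=\coprod_{[n+1]\twoheadrightarrow[m]}NZ_m$ (and likewise for $D$ and $X'$), the degeneracy operators being the inclusions of summands. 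One obtains a simplicial scheme $Z\s$, smooth over $S$ in each degree, with $g_n:Z_n\to Y$ proper, a closed simplicial subscheme $D\s$ which is a normal crossings divisor in each degree, $X'\s=Z\s\setminus D\s$, and a morphism $\eps:X'\s\to X$ to the constant simplicial scheme which is a proper hypercovering by construction ($X'_0\to X$ and each $X'_{n+1}\to(\mathrm{cosk}_n X'\s)_{n+1}$ being proper and surjective); finally $g\circ j=f\circ\eps$.

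The main obstacle is simplicial compatibility: the alterations must organize into a simplicial object. One circumvents this, as usual, by applying \cite[4.1]{deJong} only to the term $(\mathrm{cosk}_n X'\s)_{n+1}$ at each level and reconstituting the degeneracies formally (split simplicial schemes); the fact that the resulting terms are smooth over $S$, proper over $Y$, and with complement a normal crossings divisor is exactly what de Jong's theorem provides when applied to the pair $(\tilde V,\tilde V\setminus V)$, once $V$ has been compactified over $Y$ as above.
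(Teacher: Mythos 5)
Your overall strategy --- a split proper hypercovering built by coskeleton induction, with de Jong's theorem \cite[4.1]{deJong} applied at each level to a compactification over $Y$ --- is exactly what the paper's citation of \cite[6.2]{Hodge3} and \cite[2.6]{Orgogozo} points to, and your construction of a single level is correct (modulo the routine reduction to the irreducible components of $\tilde V$ before invoking de Jong, which is stated for varieties). But there is a genuine gap at the point where you assert ``one obtains a simplicial scheme $Z\s$'': you never construct the \emph{face} operators of $Z\s$. The splitting $Z_{n+1}=\coprod_{[n+1]\twoheadrightarrow[m]}NZ_m$ determines the degeneracies, but to define the faces one needs, for each $n$, a morphism $NZ_{n+1}\to(\mathrm{cosk}_n Z\s)_{n+1}$ lifting $NX'_{n+1}\to(\mathrm{cosk}_n X'\s)_{n+1}$, i.e.\ a coherent system of face maps $Z_{n+1}\to Z_n$ extending those of $X'\s$. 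Your $Z_{n+1}$ is an alteration of a compactification of $(\mathrm{cosk}_n X'\s)_{n+1}$ chosen over $Y$ alone, with no reference to $Z_0,\dots,Z_n$; such a compactification of $X'_{n+1}$ has no reason to admit extensions of the face maps $X'_{n+1}\to X'_n$ to morphisms $Z_{n+1}\to Z_n$. The same problem affects $D\s$ and the commutativity of $g\circ j=f\circ\eps$ as a diagram of simplicial schemes.

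The repair is the one in \cite[6.2.5--6.2.8]{Hodge3}: carry the compactifications along in the induction. Having constructed the $n$-truncation of the pair $(X'\s\hookrightarrow Z\s)$, augmented over $X$ and over $Y$ respectively, form $\bar N_{n+1}=(\mathrm{cosk}_n^{\,Y}Z\s)_{n+1}$, which is proper over $Y$ (a closed subscheme of a fibre product over $Y$ of the $Z_m$, $m\le n$), and note that $N_{n+1}=(\mathrm{cosk}_n^{\,X}X'\s)_{n+1}$ maps canonically to it. Take $\tilde V$ to be a Nagata compactification of $(N_{n+1})_{\mathrm{red}}$ over $\bar N_{n+1}$ and apply de Jong to $(\tilde V,\tilde V\setminus N_{n+1})$ exactly as you do; the resulting $NZ_{n+1}\to\tilde V\to\bar N_{n+1}$ then supplies all the face maps of $Z_{n+1}$, compatibly with those of $X'_{n+1}$ and with the simplicial identities, and $g_{n+1}$ remains proper. (Your auxiliary compactification $P$ over $S$ is superfluous: the lemma does not require $Z_n$ to be proper over $S$.) Note finally that for the only use made of the lemma in the paper --- the spectral sequence $E_1^{pq}=\fH^q(Rg_{p*}Rj_{p*}\eps_p^*K)$ --- a levelwise factorization $X'_p\hookrightarrow Z_p\to Y$ suffices, so your argument does establish enough for the application; but as stated, with $j\colon X'\s\to Z\s$ and $D\s\hookrightarrow Z\s$ simplicial, the lemma needs the modification above.
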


\begin{proof}
Conséquence facile de \cite[4.1]{deJong} et de \cite[6.2]{Hodge3}.
Voir \cite[2.6]{Orgogozo}.
\end{proof}

Alors
\[Rf_* K = Rf_* R\eps_*\eps^* K = Rg_* Rj_* \eps^*K,\]
donc on a une suite spectrale
\[E_1^{pq} = \fH^q(Rg_{p*} Rj_{p*} \eps_p^* K) \Rightarrow R^{p+q}f_* K.\]
$X'_p$ et $\eps_p^*K$ satisfont encore (\ref{eq.cond}), donc
$\eps_p^* K$ est lisse sur $X'_p$ et modérément ramifié le long de
$D_p$. \Dpr \cite[5.6.1]{int}, la \Tint de $\eps_p^* K$ implique
la \Tint de $Rj_{p*}\eps_p^*K$, qui donne alors la \Tint de
$Rg_{p*} Rj_{p*} \eps_p^* K$ car $g_p$ est propre. Il en suit que
$Rf_* K$ est \Tent.

\begin{coro} Soient $f:Z\to X$ un morphisme quasi-fini de schémas séparés \tf sur
$k$ et\/ $K\in\Dbc(Z,\Qlb)$ un \faisc pervers\/ $T$-entier. Alors
$f_{!*}K$ est\/ \Tent.
\end{coro}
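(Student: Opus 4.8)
The statement asserts that for $f:Z\to X$ quasi-fini and $K\in\Per(Z,\Qlb)$ a $T$-entier perverse sheaf, $f_{!*}K$ is $T$-entier. The plan is to follow the exact reduction scheme already used for $E$-compatibility in \ref{th.f!*} and for purity in the corollary after \ref{lm.t}, but feeding in Theorem \ref{th.int} (the $T$-integrality of $Rf_*$) in place of the $E$-compatibility of $Rj_*$.

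First I would reduce to the case where $f$ is an open immersion $j:U\har X$, since the finite case is trivial (for $f$ finite, $f_{!*}=f_*$, and $Rf_*$ preserves $T$-integrality by \ref{th.int}) and every quasi-fini morphism factors through an open immersion followed by a finite one by Zariski's main theorem. Next I would invoke \ref{cor.j!*} to find the filtration $U=U_0\har[j_1]U_1\har[j_2]\cdots\har[j_n]U_n=X$ with each $Y_m=U_m\bsl U_{m-1}$ smooth of pure dimension $d_m$, $K\m=j_{m!*}K\mm=\tYle[m]{-d_m-1}Rj_{m*}K\mm$, and $\fH^e(i_m^!K\m)$ lisse on $Y_m$. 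I would then prove by recursion on $m$ that each $K\m$ is $T$-entier. The case $m=0$ is the hypothesis. For the inductive step, $Rj_{m*}K\mm$ is $T$-entier by \ref{th.int} applied to $j_m$ (an open immersion, so in particular separated of finite type), hence so are all its cohomology sheaves; and $K\m=\tYle[m]{-d_m-1}Rj_{m*}K\mm$ is obtained from $Rj_{m*}K\mm$ by the naive truncation functor relative to $Y_m$, which—being a recollement truncation built from ordinary truncations on $U_{m-1}$ and on $Y_m$—only selects certain cohomology sheaves and leaves the rest unchanged: on $U_{m-1}$ one has $i_{m-1,U}^*K\m = i_{m-1,U}^*Rj_{m*}K\mm$ wait, more precisely $j_m^*K\m=K\mm$ and $i_m^*K\m$ is $\tle{-d_m-1}i_m^*Rj_{m*}K\mm$, so every cohomology sheaf of $K\m$ is a subsheaf of (indeed a cohomology sheaf of the truncation of) a cohomology sheaf of $Rj_{m*}K\mm$ or equals one of $K\mm$. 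Since $T$-entier sheaves are stable by sous-quotients, $K\m$ is $T$-entier. Iterating gives $f_{!*}K=K^{(n)}$ is $T$-entier.

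The mild subtlety—and the place I would be most careful—is checking that the recollement truncation $\tYle[m]{-d_m-1}$ genuinely preserves $T$-integrality at the level of complexes, i.e. that reading off $K\m$ from $Rj_{m*}K\mm$ via this truncation does not introduce new Frobenius eigenvalues. The cleanest way is to work one degree at a time: for $e\le -d_m-1$ one has $\fH^e(K\m)\simeq\fH^e(Rj_{m*}K\mm)$ as constructible sheaves (by the description of $\tYle[m]{-d_m-1}$ as "ordinary truncation away from $Y_m$, and $\Dle{-d_m-1}$-truncation along $Y_m$"), while for $e>-d_m-1$ the restriction $j_m^*\fH^e(K\m)=\fH^e(K\mm)$ and $i_m^*\fH^e(K\m)=\fH^e(\tle{-d_m-1}i_m^*Rj_{m*}K\mm)$ which is $0$ for $e>-d_m-1$; so $\fH^e(K\m)$ is an extension of a skyscraper-type piece (zero here) by $j_{m!}\fH^e(K\mm)$ modulo torsion, hence its stalks at closed points are subquotients of stalks of $\fH^e(K\mm)$ or of $\fH^e(Rj_{m*}K\mm)$. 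In all cases the relevant $\Fr_x$-eigenvalues are among those of a $T$-entier sheaf, so they are $T$-entier, using stability of $T$-entier elements of $\Qlb$ under the operations implicit in "sous-quotient". This is exactly parallel to the argument in the proof following \ref{lm.t}, merely replacing "poids $\le w-d_m-1$" bookkeeping with "$T$-entier" bookkeeping, and it is routine once one has \ref{th.int} in hand. Hence the main obstacle is really not in this corollary at all but in Theorem \ref{th.int} itself (and its extension to eliminate resolution of singularities, done above); granted that, the corollary is a short dévissage.
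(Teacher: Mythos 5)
Your proposal is correct and takes essentially the same route as the paper: reduce to an open immersion $j:U\har X$ via Zariski's main theorem and \ref{th.int}, apply \ref{cor.j!*} to write $j_{!*}K$ as an iterated $\tYle[m]{-d_m-1}Rj_{m*}$, and conclude from \ref{th.int}. The only difference is that you spell out the point the paper leaves implicit, namely that the truncations $\tYle[m]{-d_m-1}$ introduce no new Frobenius eigenvalues because the stalks of their cohomology sheaves at closed points are among those of $Rj_{m*}K\mm$ (on $U_{m-1}$ they agree with those of $K\mm$, on $Y_m$ with those of $\tle{-d_m-1}i_m^*Rj_{m*}K\mm$).
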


\begin{proof}
Grâce à \ref{th.int}, \ops que $f$ soit une
immersion ouverte $j : U \har X$. %On fait une récurrence analogue à celle faite
%au début de la \dem de \ref{th.f!*} pour se ramener au cas où
%l'extension intermédiaire est donnée par \ref{pr.j!*}.
On applique \ref{cor.j!*} pour voir que
\[j_{!*}K = \tYle[n]{-d_n -1}Rj_{n*} \cdots
\tYle[1]{-d_1 -1} Rj_{1*} K.
\]
La conclusion découle alors du \Thm~\ref{th.int}.
\end{proof}

En prenant $T=\emptyset$, on obtient que pour $a: X\to\Spec k$
vérifiant l'\hyp du \thm \ref{th.IH}, $Ra_* IC(X,\Qlb)$ est
$T$-entier, \ie les valeurs propres de $\Fr$ sur
$IH^i(X_\kb,\Qlb)$ sont entières sur $\Z$, ce qui donne une autre
\dem de l'\inte dans le \thm~\ref{th.IH}.


\begin{thebibliography}{99}
\let\ts=\textsc
%\bibitem[]{} , \textit{}, , .
\bibitem[BBD]{BBD} \ts{A. Beilinson, J. Bernstein, P. Deligne.}
\textit{Faisceaux pervers}, Astérisque 100, 1982.

\bibitem[De1]{WeilI} \ts{P. Deligne.} La conjecture de Weil :
I, \textit{Publ. math. IHÉS} \textbf{43} (1974), 273--308.

\bibitem[De2]{Hodge3} ------ Théorie de Hodge : III, \textit{Publ. math. IHÉS}
\textbf{44} (1974), 5--77.

\bibitem[De3]{WeilII} ------ La conjecture de Weil :
II, \textit{Publ. math. IHÉS} \textbf{52} (1980), 137--252.

\bibitem[De4]{dec} ------ Décomposition dans la catégorie dérivée, dans
\textit{Motives}, 1994, 115--128.

\bibitem[Eke]{Ekedahl} \ts{T. Ekedahl.} On the adic formalism, dans
\textit{The Grothendieck Festschrift}, Vol. II, Birkhäuser, 1990.

\bibitem[Fuji]{ind} \ts{K. Fujiwara.} \be Independence of $l$ for %change ofl
intersection cohomology (after Gabber)\ee, %Advanced Studies in Pure Math. \ee 36,
dans \textit{Algebraic Geometry 2000, Azumino}, 2002, 145--151.

%\bibitem[Ill]{misc} L. Illusie, \be Miscellaneous questions
%on traces in $l$-adic cohomology: a survey\ee, préprint, 2004.

\bibitem[deJ]{deJong} \ts{A. J. de Jong.} \be Smoothness, semi-stability and
alterations\ee, \textit{Publ. math. IHÉS} \textbf{83} (1996),
51--93.

\bibitem[Lau]{Laumon} \ts{G. Laumon.} Transformation de Fourier,
constantes d'équations fonctionnelles et conjecture de Weil,
\textit{Publ. math. IHÉS} \textbf{65} (1987), 131--210.

\bibitem[Org]{Orgogozo} \ts{F. Orgogozo.} Altérations et groupe fondamental premier à
$p$, \textit{Bull. Soc. math. Fr.} \textbf{131}, \no 1, 123--147 %change?
(2003).

\bibitem[Tian]{Tian} \ts{Y. Tian.} Faisceaux pervers -- propriétés géométriques, notes à rédiger.

\bibitem[SGA~4$\frac{1}{2}$]{SGA4d}%change?
\ts{P. Deligne.} \textit{Cohomologie étale}, Springer-Verlag,
1977.

%\bibitem[SGA~5, VI]{SGA5vi}%change?
%J. P. Jouanolou, Cohomologie $l$-adique, dans SGA 5,
%Springer-Verlag, 1977, 251-281.

\bibitem[SGA~7, XXIa]{int} ------ Théorème d'intégralité,
Exposé XXI, Appendice, %21--36,
%SGA~7
\textit{Groupe de monodromie en géométrie algébrique}, Vol. II, Springer-Verlag, 1973, 384--399. %change

\end{thebibliography}
\end{document}